\newtheorem{lemma}{Lemma}
\newtheorem{thm}{Theorem}
\newtheorem{definition}{Definition}
\newtheorem{proposition}{Proposition}
\numberwithin{equation}{section}
\begin{document}

\begin{center}
\Large{\textbf{Mass-conserving weak solutions to Oort-Hulst-Safronov coagulation equation with singular rates }}
\end{center}

\centerline{Prasanta Kumar Barik${}^1$, Pooja Rai${}^2$ and Ankik Kumar Giri${}^3$}\let\thefootnote\relax\footnotetext{Corresponding author. Tel +91-1332-284818 (O);  Fax: +91-1332-273560  \newline{\it{${}$ \hspace{.3cm} Email address: }}ankik.giri@ma.iitr.ac.in}  
\medskip
{\footnotesize

  \centerline{ ${}^{1}$Tata Institute of Fundamental Research, Centre for Applicable Mathematics,}
   \centerline{ Bangalore-560065, Karnataka, India}

  \centerline{ ${}^{2,3}$Department of Mathematics, Indian Institute of Technology Roorkee,}
   \centerline{ Roorkee-247667, Uttarakhand, India}

}

\bigskip

%
%

\begin{abstract}
Existence of global weak solutions to the continuous Oort-Hulst-Safronov (OHS) coagulation equation is investigated for coagulation kernels capturing a singularity near zero and growing linearly at infinity. The proof mainly relies on a  relation, between classical Smoluchowski coagulation equation (SCE) and OHS coagulation equation, which is introduced in \cite{Lachowicz:2003} as generalized coagulation equation. Moreover, all weak solutions formulated in a suitable sense are demonstrated to be mass-conserving. We obtain here a similar result for OHS coagulation equation as the one in \cite{Barik:2019} for SCE.
\end{abstract}

\noindent
{\bf Keywords:} Coagulation; Oort-Hulst-Safronov model; Smoluchowski model; Weak compactness; Existence; Mass-conservation.\\
{\rm \bf MSC (2010).} Primary: 45K05, 45G99, 45G10, Secondary: 34K30.\\


\section{Introduction}\label{existintroduction1}

Coagulation is a kinetic process which can be seen to take place in several physical circumstances. In this process, particles combine to form larger ones through coalescence where each particle is identified according to its size. A mathematical model to govern the coagulation of collides moving according to Brownian motion was first introduced by Smoluchowski \cite{Smoluchowski:1917}. The \emph{Smoluchowski coagulation model} was a discrete system of differential equations which illustrates the time evolution of particle size distribution for particles having  their sizes as positive integers. By considering the size of each particle  by a positive real number, the continuous version of the Smoluchowski coagulation model is given by M\"{u}ller \cite{Muller:1928}. Later, the discrete and the continuous Smoluchowski coagulation equations have been widely discussed by several researchers. The continuous Smoluchowski coagulation equation (SCE) describes the evolution of number density  $\zeta(\mu, t)$ for the particles of size $\mu \in \mathbb{R}_{>0}:=(0, \infty)$ at time $t \ge 0$ and expresses

\begin{align}\label{SCE}
\frac{\partial \zeta(\mu,t)}{\partial t}  = & \frac{1}{2} \int_0^{\mu} \Lambda(\mu-\nu, \nu) \zeta(\mu-\nu, t)   \zeta(\nu, t)d\nu \nonumber\\
& -\int_{0}^{\infty} \Lambda(\mu, \nu) \zeta(\mu, t) \zeta(\nu, t)d\nu,\ \qquad (\mu,t)\in \mathbb{R}_{>0}^2,
\end{align}
with initial condition
\begin{equation}\label{SCEin}
\zeta(\mu, 0) = \zeta^{in}(\mu)\ge 0,\ \qquad \mu\in \mathbb{R}_{>0}.
\end{equation}

The positive and negative terms on the right-hand side of \eqref{SCE} describe the birth and death of particles, respectively, as a result of
coagulation events. The coagulation rate, at which particles of size $\mu$ merge with particles of size $\nu$, is indicated by a non-negative symmetric function $\Lambda(\mu, \nu)$. Within a separate sense, a new coagulation model was introduced by Oort and van de Hulst in  \cite{Hulst:1946} and then further reformulated by Safronov in \cite{Safronov:1972}. Therefore, it is known as \emph{Oort-Hulst-Safronov (OHS) model}.
The corresponding discrete model is derived by Dubovski which is known as Safronov-Dubovski coagulation equation, see \cite{Davidson:2014, Dubovski:1999I, Dubovski:1999II, Bagland:2005} and references therein. The general Oort-Hulst-Safronov coagulation equation for the dynamics of evolution of the number density $\zeta(\mu, t)$ of particles of size $\mu>0$ at time $t \ge 0$ is given by
\begin{align}\label{OHS1}
\frac{\partial \zeta(\mu,t)}{\partial t}  = & -\frac{\partial }{\partial \mu} \bigg( \zeta(\mu, t) \int_0^{\mu} \nu \Lambda(\mu, \nu) \zeta(\nu, t)d\nu \bigg)\nonumber\\
& -\int_{\mu}^{\infty} \Lambda(\mu, \nu) \zeta(\mu, t) \zeta(\nu, t)d\nu,\ \qquad (\mu,t)\in \mathbb{R}_{>0}^2.\
\end{align}
 Here $\frac{\partial}{\partial t}$ and $\frac{\partial}{\partial \mu}$ represent the time and space partial derivatives, respectively. Furthermore, the non-negative and symmetric quantity $\Lambda(\mu, \nu)$ indicates the intensity rate at which particles of sizes $\mu$ and $\nu$ coagulate to form bigger ones. As mentioned in \cite{Lachowicz:2003}, in OHS coagulation model \eqref{OHS1}, the birth rate of particles of size $\mu$ from smaller particles does not depend on the sizes of the particles taken place in the coagulation process, but on a certain averaged quantity, and this is a basic difference with \eqref{SCE}. While the SCE and OHS models were developed in different ways, there is a relationship between these two models which has been established by Lachowicz et al.\ \cite{Lachowicz:2003} and known as \emph{generalized coagulation equation}. It should be mentioned that a relationship between these models was first observed by  Dubovski \cite{Dubovski:1999I} in discrete settings.
The purpose of the present work is to establish the existence of mass conserving weak solutions to the OHS coagulation equation \eqref{OHS1} with singular kernels via generalized coagulation equation. Thus, let us first revisit a generalized coagulation equation \cite{Lachowicz:2003} for the number density $\zeta(\mu, t)$ of particle of size $\mu$ at time $t$ as
\begin{align}\label{GenliCoagul}
\frac{\partial \zeta(\mu, t)}{\partial t} =&\frac{1}{2}\int_{0}^{\infty} \int_{0}^{\infty} P(\mu; \nu, \tau) \Lambda(\nu, \tau)  \zeta(\nu, t) \zeta(\tau, t) d\tau d\nu \nonumber\\
&- \int_{0}^{\infty} \Lambda(\mu, \nu) \zeta(\mu, t) \zeta(\nu, t)d\nu,
\end{align}
where $P$ is the nonnegative weighted probability function which describes the collision of a particle of size $\nu$ and a particle of size $\tau$ produces a particle of size $\mu$ and satisfying
\begin{align}\label{Weighted Prob1}
P(\mu; \nu, \tau) = P(\mu; \tau, \nu), \ \ \ (\mu, \nu, \tau) \in \mathbb{R}_{>0}^3,
\end{align}
and
\begin{align}\label{Weighted Prob2}
\int_0^{\infty} \mu P(\mu; \nu, \tau) d\mu = \nu+\tau, \ \ \ (\nu,\tau) \in  \mathbb{R}_{>0}^2.
\end{align}
Before reviewing the existing literature let us now define the total mass (volume) of the particles for the number density $\zeta$ at time $t\ge 0$ as:
\begin{equation}\label{TotalmassOHS}
\mathcal{M}_1(t)= \mathcal{M}_1(\zeta)(t):=\int_0^{\infty}{\mu} \zeta(\mu, t)d{\mu}.
\end{equation}

It is well understood, according to the conservation of matter, that the total mass of particles is neither produced nor destroyed. It is therefore anticipated that the total mass of the system will be preserved all over the time evolution specified by \eqref{SCE}--\eqref{SCEin}, that is, $\mathcal{M}_1(\zeta)(t)=\mathcal{M}_1(\zeta^{in})$ for each $t\ge 0$. It is worth noting, however, that for the multiplicative rate $\Lambda(\mu, \nu)=\mu \nu$, the total mass conservation breaks down for the SCE at finite time $t=1$, see \cite{Leyvraz:1981}. Physical understanding is that the missing mass corresponds to particles of infinite size produced by the uncontrolled growth in the system because of the extremely high rate of coalescence of really big particles. Such massive particles \cite{Aldous:1999} are called infinite-gels, and their appearance is called the  \emph{gelation phenomenon}. The earliest time at which the gelation phenomenon takes place is called the \emph{gelation time}.

The main objective of this paper is to have a global existence result on weak solutions to the OHS equation \eqref{OHS1}--\eqref{SCEin} for coagulation kernels having a small-volume singularity. Another task is to show that the weak solutions constructed here are indeed mass-conserving i.e. \eqref{ThemMasscons} holds. Now, let us review some of the current literature available for coagulation models, including OHS and SCE equations, and their associated models. For the last three decades, the SCE and its associated models have been of considerable interest to mathematicians and physicists.

 After the classical works of Stewart \cite{Stewart:1989, Stewart:1990} and Ball \& Carr \cite{Ball:1990}, there have been many papers dedicated to the existence and uniqueness of solutions to the SCE for coagulation kernels that are locally bounded, along with the mass conservation property and gelation phenomenon, see \cite{Aldous:1999, Barik:2018, Escobedo:2002, Escobedo:2003, Giri:2011, Giri:2012, Laurencot:2005, Laurencot:2006, Laurencot:2015, Laurencot:2002L, Mischler:2004, Norris:1999, Stewart:1990} and the references therein. Although all of these studies for singular coagulation kernels are of great mathematical significance,  due to the availability of physically relevant singular kernels such as: Smoluchowski coagulation kernel for the Brownian motion, granulation kernel and stochastic stirred froth \cite{Aldous:1999, Barik:2019, Norris:1999}, there are a few papers in which existence and uniqueness of solutions to the SCE with singular coagulation rates have been investigated, see \cite{Camejo:2015, Escobedo:2006, Norris:1999, Barik:2019}. Recently, in \cite{Barik:2019}, the global existence of weak solutions to the continuous SCE is shown for singular coagulation kernels which extends the results obtained in \cite{Camejo:2015, Norris:1999}. More specifically, a linear growth at infinity of the coagulation kernel is incorporated and the second moment of the initial data need not be finite. In addition, all weak solutions in a suitable sense are shown to be mass-conserving, a property that has been demonstrated in \cite{Norris:1999} under stronger assumptions. More details can be found in \cite{Barik:2019}.

Now, let us turn to the mathematical studies of the OHS coagulation model. In \cite{Lachowicz:2003}, after developing a relationship between the SCE and OHS model, Lachowicz et al.\ have established the existence of weak solutions to OHS coagulation equation \eqref{OHS1}--\eqref{SCEin} for a class of sub-additive coagulation kernel $\Lambda(\mu, \nu)\le k(1+\mu+\nu)$ and sub-quadratic coagulation kernel
$$\Lambda(\mu, \nu) \le k(1+\mu)(1+\nu)~\text{with}~ \sup_{\mu\in[0,R]}\frac{\Lambda(\mu, \nu)}{\nu}\rightarrow 0 ~\text{as}~ \nu\rightarrow +\infty$$
for each $R \geq 1$, where these coagulation kernels also satisfy $\Lambda \in \mathbb{W}_{\mbox{loc}}^{1,\infty}([0,\infty))^{2}$ and $\partial_{\mu}\Lambda(\mu, \nu)\geq-\alpha$ for some $\alpha\ge 0$. In addition, weak solutions to OHS coagulation equation \eqref{OHS1}--\eqref{SCEin} have been found to be mass-conserving for sub-additive kernels and the occurrence of gelation phenomenon takes place for sub-quadratic coagulation kernels. Furthermore, they have investigated the large time behavior of the solutions. It has also been observed that a compactly supported initial distribution propagates with finite speed which may not be possible for SCE. Recently, the global existence of solutions to the discrete version of OHS model i.e.\ Safronov-Dubovski coagulation equation has been investigated, by Davidson \cite{Davidson:2014}, for unbounded kernels. In addition, it is shown that the mass-conservation property is satisfied for kernels having  sub-linear growth at infinity. The questions of uniqueness and continuous dependence on the initial condition are also demonstrated for bounded kernels. In \cite{Laurencot:2005}, Lauren\c{c}ot addressed self-similar solutions to the OHS coagulation model with the coagulation kernel $\Lambda \equiv 1$. Moreover, it has also been shown that the existing solution is unique. In \cite{Laurencot:2006}, the existence of self-similar solutions is studied to the OHS model with the multiplicative coagulation kernel $\Lambda(\mu, \nu)=\mu\nu$.  Recently, in \cite{Bagland:2007}, Bagland and Lauren\c{c}ot have established the existence of self-similar solutions for the OHS model \eqref{OHS1}--\eqref{SCEin} when the coagulation kernel satisfies $\Lambda(\mu,\nu)=(\mu^{\lambda}+\nu^{\lambda})$, for $\lambda \in (0,1)$. However, this is clearly visible that the existing literature on the existence and uniqueness of solutions to the OHS model is quite limited in comparison to the classical SCE. Moreover, the existence of mass-conserving weak solutions to the OHS model for coagulation kernels having singularity for small volumes is missing. Therefore, the main novelty of the present work is to extend the result on the global existence of mass-conserving weak solutions to the OHS equation \eqref{OHS1}--\eqref{SCEin} in \cite{Lachowicz:2003} from non-singular coagulation kernels to singular coagulation kernels having linear growth at infinity. It is worth mentioning that the growth conditions \eqref{coagulation kernel} considered here on coagulation kernels and assumption on the the initial data are motivated from the ones considered in \cite{Barik:2019} for showing the existence of mass-conserving solutions to the SCE \eqref{SCE}--\eqref{SCEin} with singular rates. The existence proof is mainly motivated from \cite{Lachowicz:2003} and \cite{Barik:2019}.

Let us now outline the contents of the paper. Next section contains some assumptions on coagulation kernel and on the initial data together with the definition of weak solutions and the statement of the main result. In Section 3, the existence of a weak solution to the OHS equation \eqref{OHS1}--\eqref{SCEin} is shown by using a classical weak $L^1$ compactness method. In the last section, it is investigated that all weak solutions defined in a suitable sense are mass-conserving.



\section{Assumptions, Definition and Main result}
 In order to state the main existence result to the OHS model \eqref{OHS1}--\eqref{SCEin}, let us first introduce some assumptions on the coagulation kernel and initial data.\\ \\
Assume that the coagulation kernel $\Lambda$ is a non-negative measurable function on $\mathbb{R}_{>0} \times \mathbb{R}_{>0}$ and there are $\sigma\ge 0$ and $k \ge 0$ such that
\begin{equation}\label{coagulation kernel}
\left.
\begin{split}
0 \le \Lambda(\mu, \nu) & = \Lambda(\nu, \mu) \le k (\mu \nu)^{-\sigma},\  (\mu, \nu)\in (0, 1)^2, \\
0 \le \Lambda(\mu, \nu) &=\Lambda(\nu, \mu) \le k \mu \nu^{-\sigma},\  (\mu, \nu) \in [1, \infty) \times (0, 1),\\
0 \le \Lambda(\mu, \nu) &=  \Lambda(\nu, \mu) \le k (\mu+\nu),\  (\mu, \nu)\in [1, \infty)^2,
\end{split}
\right\}
\end{equation}
and the coagulation kernel $\Lambda$ also satisfies
\begin{align}\label{bound coagulation kernel}
\Lambda \in W_{\mbox{loc}}^{1, \infty}([0, \infty)^2),  \ \mbox{and} \ \Lambda_{\mu} (\mu, \nu)  \ge -\eta \mu^{-\sigma-1}\nu^{-\sigma},
\end{align}
for some $\eta \geq 0$ and $(\mu, \nu) \in \mathbb{R}_{>0}^2$.

Next, let us assume that the initial data $\zeta^{in}$ belongs to the following weighted $L^1$ space
\begin{align}
0 \le \zeta^{in} \in \mathcal{Y}^{+},
\end{align}
where $\mathcal{Y}^{+}$ denotes the positive cone of the Banach space
\begin{equation}\label{Banachspace}
\mathcal{Y}=L^{1}(\mathbb{R}_{>0};\ (\mu+\mu^{-2\sigma})d\mu),
\end{equation}
endowed with the norm
\begin{equation}\label{Banachspace1}
\|\zeta\|_{\mathcal{Y}}=\int_{0}^{\infty}(\mu+\mu^{-2\sigma})|\zeta(\mu,t)|d\mu.
\end{equation}

\begin{definition}\label{definition}
Assume that $\zeta^{in} \in \mathcal{Y}^{+}$ and $T\in(0, \infty]$. For $t \in[0, T)$, a weak solution to \eqref{OHS1}--\eqref{SCEin} on $[0, T)$ is a nonnegative function
\begin{equation*}
\zeta  \in \mathcal{C}_w([0,T); L^{1}(\mathbb{R}_{>0}; d\mu))\cap L^{\infty}(0,T; L^1(\mathbb{R}_{>0}; (\mu^{-\sigma} + \mu) d\mu ) ),
\end{equation*}
 satisfying
\begin{align}\label{weaksolohs}
\int_{0}^{\infty} \omega(\mu) \{ \zeta(\mu,t)- \zeta^{in}(\mu) \}d\mu
=\int_{0}^{t}\int_{0}^{\infty}\int_{0}^{\nu} \omega_1(\nu, \tau)  \Lambda(\nu, \tau)  \zeta(\nu, s) \zeta(\tau, s) d\tau d\nu ds,
\end{align}
where
\begin{align}\label{Identity1}
 \omega_1(\nu, \tau) = \tau \omega'(\nu)-\omega(\tau),
 \end{align}
 for any $\omega\in\mathbb{W}^{1,\infty}(\mathbb{R}_{>0})$, and  $\omega'$ is compactly supported. The space $\mathcal{C}_w([c, d]; L^{1}(\mathbb{R}_{>0}; d\mu))$ indicates the space of all weakly continuous functions from $[c, d]$ to $L^{1}(\mathbb{R}_{>0}; d\mu)$.
\end{definition}


Next, we state the weak formulations of other two coagulation models.
For any test function $\omega \in \mathcal{C}_c^{\infty}(\mathbb{R}_{>0})$, the weak formulation to the classical SCE \eqref{SCE}--\eqref{SCEin} is given by

\begin{align}\label{WeakSCM}
\int_{0}^{\infty} \omega(\mu) \{ \zeta(\mu, t)-\zeta^{in}(\mu) \} d\mu = & \int_0^t  \int_{0}^{\infty} \int_{0}^{\tau}
 \tilde{\omega}(\nu, \tau)  \Lambda(\nu, \tau) \zeta(\nu, s) \zeta(\tau, s) d\nu d\tau ds,
\end{align}
where
\begin{align}\label{Identity2}
 \tilde{\omega}(\nu, \tau) =  \omega(\nu+\tau)-\omega(\nu)-\omega(\tau),
 \end{align}
 and the weak formulation to \eqref{GenliCoagul}--\eqref{SCEin} is stated as
\begin{align}\label{WeakGen}
 \int_{0}^{\infty} \omega(\mu) \{ \zeta(\mu, t) -\zeta^{in}(\mu) \} d\mu =  \int_0^t  \int_{0}^{\infty} \int_{0}^{\nu} \omega_2(\nu, \tau) \Lambda(\nu, \tau) \zeta(\nu, s) \zeta(\tau, s) d\tau d\nu ds,
\end{align}
where
\begin{align}\label{Identity3}
 \omega_2(\nu, \tau) =  \int_{0}^{\infty} P(\mu; \nu, \tau) \omega(\mu)d\mu -\omega(\nu)-\omega(\tau).
 \end{align}

 In order to obtain the common frame of both coagulation equations \eqref{SCE} and \eqref{OHS1} by a family of generalized coagulation equation \eqref{GenliCoagul}, we replace $P_{\varepsilon}$ for $P$ and $\Lambda_{\varepsilon}$ for $\Lambda$ and substituting them into \eqref{WeakGen}, where
\begin{align}\label{Weighted Prob11}
P_{\varepsilon}(\mu; \nu, \tau)=\delta(\mu-\max
\{\nu, \tau\}-\varepsilon\min\{\nu, \tau \}) + (1-\varepsilon)\delta(\mu-\min\{\nu, \tau\}),
\end{align}
and
\begin{align}\label{aGenerli}
\Lambda_{\varepsilon}(\nu, \tau)=\frac{\Lambda(\nu, \tau)}{\varepsilon},
\end{align}
for $\varepsilon\in(0, 1]$, hence, we obtain
\begin{align}\label{WeakGenSCEli}
\int_{0}^{\infty} \omega(\mu) [\zeta_{\varepsilon}(\mu, t) -\zeta^{in}_{\varepsilon}(\mu) ] d\mu = \int_0^t \int_{0}^{\infty} \int_{0}^{\nu}  \omega_{\varepsilon}(\nu, \tau)  \Lambda(\nu, \tau) \zeta_{\varepsilon}(\nu, s) \zeta_{\varepsilon}(\tau, s) d\tau d\nu ds,
\end{align}
where
\begin{align}\label{Identity4}
 \omega_{\varepsilon}(\nu, \tau) = \frac{\omega(\nu+\varepsilon \tau)-\omega(\nu)}{\varepsilon}-\omega(\tau).
 \end{align}

Analogously, for $\varepsilon\in(0,1]$, a non-negative function
 \begin{equation}\label{Weaksolugenera}
 \zeta_{\varepsilon}\in \mathcal{C}_w([0, T); L^{1}(\mathbb{R}_{>0}; d\mu))\cap L^{\infty}(0,T;  L^1(\mathbb{R}_{>0}; (\mu^{-\sigma} + \mu) d\mu ))
 \end{equation}
 is a weak solution to the following equation
 \begin{align}\label{GenerSCEeps}
\partial_{t}(\zeta_{\varepsilon})(\mu, t)= Q(\zeta_{\varepsilon})(\mu, t),\ \ \ (\mu, t) \in \mathbb{R}_{>0}^{2},
\end{align}
with
\begin{equation}\label{GenerSCEepsin}
 \zeta_{\varepsilon}(\mu, 0)=\zeta^{in}(\mu) \ge 0,
 \end{equation}
where
\begin{align*}
Q(\zeta_{\varepsilon})(\mu, t)=&\frac{1}{\varepsilon}\int_{0}^{\mu/(1+\varepsilon)} \Lambda(\mu-\varepsilon \nu, \nu) \zeta_{\varepsilon}(\mu-\varepsilon \nu, t) \zeta_{\varepsilon}(\nu, t)d\nu\\
&-\zeta_{\varepsilon}(\mu, t)\bigg[\bigg(\frac{1}{\varepsilon}-1\bigg)\int_{0}^{\mu}\Lambda(\mu, \nu) \zeta_{\varepsilon}(\nu, t)d\nu+\int_{0}^{\infty}\Lambda(\mu, \nu) \zeta_{\varepsilon}(\nu, t) d\nu \bigg].
\end{align*}

 For $\varepsilon=1$ and $\varepsilon\rightarrow 0$, one can see that the equation \eqref{WeakGenSCEli} converges to the weak formulation of \eqref{SCE} and \eqref{OHS1}, respectively. Here, we will indeed prove the convergence of the solution $\zeta_{\varepsilon}$ to \eqref{GenerSCEeps},\eqref{GenerSCEepsin} towards a weak solution to \eqref{OHS1}--\eqref{SCE} as $\varepsilon \to 0$.\\

This is the right time to state our main result of this paper.
\begin{thm}\label{Theorem1}
Let the coagulation kernel satisfies \eqref{coagulation kernel}--\eqref{bound coagulation kernel} and the initial data $\zeta^{in} \in \mathcal{Y}^+$. Then, for $\varepsilon \in (0, 1]$, there exists a subsequence $(\zeta_{\varepsilon_k})$ of $(\zeta_{\varepsilon})$  to \eqref{GenerSCEeps}--\eqref{GenerSCEepsin}  and a weak solution $\zeta$ to \eqref{OHS1}--\eqref{SCEin} on $[0, \infty)$ which satisfies
\begin{align}\label{ThemExistence}
\zeta_{\varepsilon_k} \to \zeta,\ \ \text{in}\ \ \mathcal{C}_w([0, T); L^1(\mathbb{R}_{>0};(\mu^{-\sigma} + \mu) d\mu))
\end{align}
as $\varepsilon_k \to 0$, for each $T \in (0, \infty]$ and $t \in [0, T)$. Furthermore, $\zeta$ satisfies
\begin{align}\label{ThemMasscons}
\mathcal{M}_1(\zeta^{in}):= \int_0^{\infty} \mu \zeta^{in}(\mu) d\mu= \int_0^{\infty} \mu \zeta(\mu, t) d\mu= \mathcal{M}_1(\zeta)(t), \ \ t\ge 0.
\end{align}
\end{thm}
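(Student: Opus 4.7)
The plan is to exploit the interpolating family \eqref{GenerSCEeps}--\eqref{GenerSCEepsin}, whose weak formulation \eqref{WeakGenSCEli} formally reduces to \eqref{weaksolohs} in the limit $\varepsilon\to 0$. First I would establish existence of $\zeta_\varepsilon$ for each fixed $\varepsilon\in(0,1]$ via a further truncation of $\Lambda$ near $0$ and $\infty$ combined with a Banach fixed-point argument, as in \cite{Lachowicz:2003,Barik:2019}, and then remove the truncation using the $\varepsilon$-uniform estimates described next.

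\noindent\textbf{A priori bounds.} Two test functions supply the essential controls in \eqref{WeakGenSCEli}. Taking $\omega(\mu)=\mu$ makes $\omega_\varepsilon(\nu,\tau)$ identically zero, so that $\mathcal{M}_1(\zeta_\varepsilon)(t)=\mathcal{M}_1(\zeta^{in})$ for every $t\geq 0$. To tame the small-volume singularity, I would test against a $W^{1,\infty}$ truncation of $\omega(\mu)=\mu^{-2\sigma}$ and exploit the lower bound on $\Lambda_\mu$ in \eqref{bound coagulation kernel}; together with \eqref{coagulation kernel}, this yields a Gronwall estimate on $\int_0^\infty \mu^{-2\sigma}\zeta_\varepsilon(\mu,t)\,d\mu$ uniform in $\varepsilon$. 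Equi-integrability of $\{\zeta_\varepsilon(\cdot,t)\}$ then follows from de la Vall\'ee-Poussin applied to $\zeta^{in}$ and propagated along the dynamics.

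\noindent\textbf{Compactness and passage to the limit.} $\varepsilon$-uniform time equicontinuity of $t\mapsto\int\omega\zeta_\varepsilon\,d\mu$ for admissible $\omega$ follows from \eqref{WeakGenSCEli} and the moment bounds above; combined with equi-integrability, an Arzel\`a-Ascoli argument in the weak-$L^1$ topology extracts a subsequence $\zeta_{\varepsilon_k}\to\zeta$ in $\mathcal{C}_w([0,T);L^1(\mathbb{R}_{>0};(\mu^{-\sigma}+\mu)d\mu))$. To identify $\zeta$ as a weak OHS solution I use the pointwise limit
\begin{equation*}
\omega_\varepsilon(\nu,\tau)=\frac{\omega(\nu+\varepsilon\tau)-\omega(\nu)}{\varepsilon}-\omega(\tau)\longrightarrow \tau\omega'(\nu)-\omega(\tau)=\omega_1(\nu,\tau),
\end{equation*}
together with the mean-value bound $|\omega_\varepsilon(\nu,\tau)|\leq \tau\|\omega'\|_\infty+\|\omega\|_\infty$ valid for every $\varepsilon\in(0,1]$. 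A Vitali-type argument---using \eqref{coagulation kernel} to absorb both the small-$\mu$ singularity (through the $\mu^{-2\sigma}$ moment) and the large-$\mu$ linear growth (through $\mathcal{M}_1$)---then lets us pass from \eqref{WeakGenSCEli} to \eqref{weaksolohs}.

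\noindent\textbf{Mass conservation.} With $\omega_n(\mu)=\min(\mu,n)$, a direct computation gives $\omega_{1,n}(\nu,\tau)=\tau\mathbf{1}_{\{\nu<n\}}-\min(\tau,n)$, which vanishes on the integration domain $\{0\leq\tau\leq\nu<n\}$. The problem thus reduces to showing
\begin{equation*}
\int_0^t\int_n^\infty\int_0^\nu \min(\tau,n)\,\Lambda(\nu,\tau)\,\zeta(\nu,s)\zeta(\tau,s)\,d\tau\,d\nu\,ds \longrightarrow 0\qquad\text{as }n\to\infty,
\end{equation*}
which I expect to be the main obstacle. Splitting the $\tau$ integral at $1$ and $n$ and applying \eqref{coagulation kernel}, the linear growth $\Lambda(\nu,\tau)\leq k(\nu+\tau)$ on $[1,\infty)^2$, combined with the moment bounds on $\mathcal{M}_1(\zeta)$ and $\int\mu^{-2\sigma}\zeta\,d\mu$, should yield the required decay by dominated convergence; this growth restriction at infinity is precisely what rules out gelation and makes the argument work.
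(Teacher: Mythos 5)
Your proposal follows essentially the same route as the paper: existence for a truncated kernel via a Lipschitz ODE argument, $\varepsilon$-uniform moment bounds from monotone test functions, uniform integrability via de la Vall\'ee-Poussin and the lower bound on $\Lambda_\mu$, weak-$L^1$ Arzel\`a--Ascoli compactness, identification of the limit through $\omega_\varepsilon\to\omega_1$, and mass conservation by testing with $\min(\mu,n)$ and killing the tail term using the linear growth of $\Lambda$. The only imprecision is your claim that $\omega(\mu)=\mu$ gives exact conservation of $\mathcal{M}_1(\zeta_\varepsilon)$ — that test function is not admissible (its derivative is not compactly supported), and the paper correspondingly obtains only $\mathcal{M}_1(\zeta_\varepsilon)(t)\le\mathcal{M}_1(\zeta^{in})$ at the approximate level, which is all the compactness argument needs; equality is recovered only for the limit $\zeta$ by the tail estimate you describe.
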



\section{Existence of weak solutions}

Proof of the existence result is based on a weak compactness technique in $L^1(\mathbb{R}_{>0}; d\mu)$ which was first proposed by Stewart in \cite{Stewart:1989} for the continuous SCE and further elaborated in later articles. For the OHS model, this technique was well implemented in \cite{Lachowicz:2003}.  In order to apply the weak $L^1$ compactness technique, one has to construct a sequence of solutions to some approximating equations that will be done here later.\\

 Now, let us first recall a particular type of convex functions and their properties. As $\zeta^{in}\in \mathcal{Y}^+$, a refined version of de la Vall\'{e}e-Poussin theorem is given in  \cite[Theorem~2.8]{Laurencot:2015} guarantees the existence of two non-negative convex functions $\Psi_1, \Psi_2 \in \mathcal{C}^{2}([0, \infty))$, whose derivatives are concave, satisfying
\begin{align}\label{convexp1}
\Psi_j(0)=0,~~~\lim_{s \to {\infty}}\frac{\Psi_j(s)}{s}=\infty,
\end{align}
and
\begin{align}\label{convexp2}
\Gamma_1 := \int_0^{\infty} \Psi_1(\mu) \zeta^{in}(\mu)d\mu<\infty,~~\text{and}~~\Gamma_2 :=\int_0^{\infty}{\Psi_2(\mu^{-\sigma} \zeta^{in}(\mu))}d\mu<\infty.
\end{align}
The above convex functions obey the following properties.

\begin{lemma}\label{Lemmaconvex}
Let us consider $\Psi_1$ and $\Psi_2$ be two convex functions and their derivatives are concave functions. Then, the following results hold 
\begin{equation}\label{convexp3}
\hspace{-5cm} \Psi_2(z_1)\le z_1 \Psi'_2(z_1)\le 2\Psi_2(z_1),
\end{equation}
\begin{equation}\label{convexp4}
\hspace{-5.5cm} z_1\Psi'_2(z_2)\le \Psi_2(z_1)+\Psi_2(z_2),
\end{equation}
and
\begin{equation}\label{convexp5}
0 \le \Psi_1(z_1+z_2)-\Psi_1(z_1)-\Psi_1(z_2)\le  2\frac{z_1\Psi_1(z_2)+z_2\Psi_1(z_1)}{(z_1+z_2)},
\end{equation}
 for all $z_1, z_2 \in \mathbb{R}_{>0}$. 
\end{lemma}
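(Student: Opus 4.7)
The plan is to prove the three inequalities in the order (a), (b), (c), since (a) will be needed both for (b) and for the upper half of (c). The only structural properties I will exploit are that each $\Psi_j$ is convex with $\Psi_j(0)=0$ (so $\Psi_j'$ is non-decreasing on $[0,\infty)$ and $\Psi_j'(0)\ge 0$) together with the concavity of $\Psi_j'$. The argument for (a) does not depend on the subscript $j$, so the same inequality will be available for $\Psi_1$ when I need it in step (c).

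For (a), I would write $\Psi_2(z_1)=\int_0^{z_1}\Psi_2'(s)\,ds$. The left inequality $\Psi_2(z_1)\le z_1\Psi_2'(z_1)$ is immediate from the fact that $\Psi_2'$ is non-decreasing. For the right inequality, concavity of $\Psi_2'$ on $[0,z_1]$ combined with $\Psi_2'(0)\ge 0$ yields $\Psi_2'(s)\ge (s/z_1)\Psi_2'(z_1)+(1-s/z_1)\Psi_2'(0)\ge (s/z_1)\Psi_2'(z_1)$, and integrating over $[0,z_1]$ gives $\Psi_2(z_1)\ge \tfrac12 z_1\Psi_2'(z_1)$. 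For (b), I would invoke the supporting-tangent form of convexity $\Psi_2(z_1)\ge \Psi_2(z_2)+\Psi_2'(z_2)(z_1-z_2)$; adding $\Psi_2(z_2)$ to both sides produces $\Psi_2(z_1)+\Psi_2(z_2)\ge \bigl(2\Psi_2(z_2)-z_2\Psi_2'(z_2)\bigr)+z_1\Psi_2'(z_2)$, and the bracketed term is non-negative by the right half of (a), which gives (b).

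For (c), the lower bound is the standard superadditivity of a convex function vanishing at $0$: writing $z_i = \tfrac{z_i}{z_1+z_2}(z_1+z_2)+\bigl(1-\tfrac{z_i}{z_1+z_2}\bigr)\cdot 0$ and applying convexity gives $\Psi_1(z_i)\le \tfrac{z_i}{z_1+z_2}\Psi_1(z_1+z_2)$, and summing over $i=1,2$ yields $\Psi_1(z_1)+\Psi_1(z_2)\le \Psi_1(z_1+z_2)$. For the upper bound I would express the difference as
\[
\Psi_1(z_1+z_2)-\Psi_1(z_1)-\Psi_1(z_2)=\int_0^{z_2}\bigl[\Psi_1'(z_1+u)-\Psi_1'(u)\bigr]\,du,
\]
and use that concavity of $\Psi_1'$ makes $u\mapsto \Psi_1'(z_1+u)-\Psi_1'(u)$ non-increasing on $[0,\infty)$, so the integrand is dominated by its value at $u=0$, namely $\Psi_1'(z_1)-\Psi_1'(0)\le\Psi_1'(z_1)$. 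This produces the bound $z_2\Psi_1'(z_1)$, and by the symmetric computation starting from $\int_0^{z_1}[\Psi_1'(z_2+u)-\Psi_1'(u)]\,du$ one also gets $z_1\Psi_1'(z_2)$. Taking the convex combination of these two bounds with the symmetric weights $z_1/(z_1+z_2)$ and $z_2/(z_1+z_2)$ yields
\[
\Psi_1(z_1+z_2)-\Psi_1(z_1)-\Psi_1(z_2)\le \frac{z_1 z_2}{z_1+z_2}\bigl[\Psi_1'(z_1)+\Psi_1'(z_2)\bigr]=\frac{z_2\bigl(z_1\Psi_1'(z_1)\bigr)+z_1\bigl(z_2\Psi_1'(z_2)\bigr)}{z_1+z_2},
\]
and a final application of (a) to $\Psi_1$ at $z_1$ and $z_2$ bounds each $z_i\Psi_1'(z_i)$ by $2\Psi_1(z_i)$, delivering the claimed right-hand side.

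The step I expect to require the most care is the upper bound in (c): neither of the two natural single bounds $z_2\Psi_1'(z_1)$ and $z_1\Psi_1'(z_2)$ alone is sharp enough, and one has to identify the particular convex combination weighted by $z_i/(z_1+z_2)$ that rearranges the result into the pairings $z_i\Psi_1'(z_i)$ so that (a) can be invoked to bring out the factor $2$ in the stated numerator. The remaining steps are routine consequences of convexity of $\Psi_j$, concavity of $\Psi_j'$, and the vanishing of $\Psi_j$ at zero.
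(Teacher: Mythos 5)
Your proof is correct: the integral representation $\Psi_2(z_1)=\int_0^{z_1}\Psi_2'(s)\,ds$ with monotonicity and concavity of $\Psi_2'$ (plus $\Psi_j(0)=0$, $\Psi_j'(0)\ge 0$) gives \eqref{convexp3}, the tangent-line inequality combined with \eqref{convexp3} gives \eqref{convexp4}, and the increment formula $\int_0^{z_2}[\Psi_1'(z_1+u)-\Psi_1'(u)]\,du$ together with the weighted combination of the two bounds $z_2\Psi_1'(z_1)$ and $z_1\Psi_1'(z_2)$ gives \eqref{convexp5}; all steps check out. Note that the paper itself offers no proof of this lemma — it is quoted as a known consequence of the refined de la Vall\'ee-Poussin construction in \cite{Laurencot:2015} — and your argument is essentially the standard one from that reference, so there is no substantive divergence to report.
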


Next, we introduce the following convex function
\[
 \Psi_{j,\lambda}(\mu) :=
\begin{cases}\label{convexlambda}
\Psi_j(\mu) & \text{if $\mu \in [0, \lambda ]$} \\
\Psi'_j(\lambda)(\mu-\lambda)+\Psi_j(\lambda) & \text{if $ \mu \in [\lambda, \infty)$},
\end{cases}
\]
for $\lambda\geq2$ and $j=1, 2$. One can easily check that the above $\Psi_{j,\lambda}$, for $j=1,2$ satisfy \eqref{convexp1}--\eqref{convexp2} and Lemma \ref{Lemmaconvex}. Now, this is the right time to derive a priori estimates in the next subsection.

\subsection{Moment Estimate}
\begin{lemma}\label{LemmaUniformbound1}
Let $T>0$ and $\epsilon \in (0, 1]$. Assume that $ \zeta^{in } \in  \mathcal{Y}^+$ and the coagulation kernel satisfies \eqref{coagulation kernel}. Let $\zeta_{\epsilon } \in \mathcal{C}([0, T); L^1(\mathbb{R}_{>0}; d\mu))$ be a weak solution \eqref{GenerSCEeps}--\eqref{GenerSCEepsin} on $[0, T)$. Then, we have
\begin{align}\label{UniformInt2}
 \int_{0}^{\infty} ( \mu^{-2\sigma}+\mu) \zeta_{\varepsilon}(\mu, t)  d\mu \le \Theta,
\end{align}
where $\Theta :=  \int_{0}^{\infty} (\mu^{-2\sigma}+\mu) \zeta^{in}(\mu)d\mu$.
\end{lemma}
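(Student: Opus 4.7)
The plan is to obtain the bound \eqref{UniformInt2} by plugging two carefully chosen moment weights into the weak formulation \eqref{WeakGenSCEli} and observing that in each case the resulting collisional quantity $\omega_\varepsilon(\nu,\tau)$ has a favorable sign. I would first treat the linear moment with $\omega(\mu)=\mu$. A direct calculation gives
\begin{equation*}
\omega_\varepsilon(\nu,\tau)=\frac{(\nu+\varepsilon\tau)-\nu}{\varepsilon}-\tau=0,
\end{equation*}
so the right-hand side of \eqref{WeakGenSCEli} vanishes identically and the first-moment is conserved at the $\varepsilon$-level, i.e.\ $\int_0^\infty \mu\,\zeta_\varepsilon(\mu,t)\,d\mu=\int_0^\infty \mu\,\zeta^{in}(\mu)\,d\mu$.

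Next, for the negative-order moment, I would test \eqref{WeakGenSCEli} with $\omega(\mu)=\mu^{-2\sigma}$. Because $\sigma\ge 0$, the map $\mu\mapsto \mu^{-2\sigma}$ is non-increasing, so $(\nu+\varepsilon\tau)^{-2\sigma}\le \nu^{-2\sigma}$ and therefore
\begin{equation*}
\omega_\varepsilon(\nu,\tau)=\frac{(\nu+\varepsilon\tau)^{-2\sigma}-\nu^{-2\sigma}}{\varepsilon}-\tau^{-2\sigma}\le 0.
\end{equation*}
Since $\Lambda\ge 0$ and $\zeta_\varepsilon\ge 0$, the double integral on the right of \eqref{WeakGenSCEli} is non-positive, which yields
\begin{equation*}
\int_0^\infty \mu^{-2\sigma}\zeta_\varepsilon(\mu,t)\,d\mu \;\le\; \int_0^\infty \mu^{-2\sigma}\zeta^{in}(\mu)\,d\mu.
\end{equation*}
Adding the two estimates gives \eqref{UniformInt2} with the constant $\Theta$ defined exactly as in the statement.

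The principal difficulty, and the step that demands actual work, is that neither $\mu\mapsto \mu$ nor $\mu\mapsto \mu^{-2\sigma}$ is admissible in Definition \ref{definition}: both lie outside $W^{1,\infty}(\mathbb{R}_{>0})$ with compactly supported derivative. To make the argument rigorous I would introduce cut-offs, using for instance $\omega^{R}(\mu)=\min\{\mu,R\}$ for the mass-type moment and $\omega^{r,R}(\mu)=\min\{\mu^{-2\sigma},r^{-2\sigma}\}\,\chi_{[\mu\le R]}$ suitably smoothed near the truncation points, for the singular moment. For each such truncation the sign computations above remain valid pointwise (one checks that the difference quotient $\varepsilon^{-1}[\omega^{r,R}(\nu+\varepsilon\tau)-\omega^{r,R}(\nu)]$ stays non-positive because $\omega^{r,R}$ is non-increasing by construction). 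One then passes to the limit $R\to\infty$ and $r\to 0$ by monotone convergence on the left and Fatou's lemma on the right; the integrability of the collisional contributions in this limit is controlled by the kernel bound \eqref{coagulation kernel}, the uniform $L^1$-bound on $\zeta_\varepsilon$ already built into the functional class \eqref{Weaksolugenera}, and the hypothesis $\zeta^{in}\in\mathcal{Y}^+$, which is precisely what makes $\Theta$ finite.
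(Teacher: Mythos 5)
Your proposal is correct and follows essentially the same route as the paper: test \eqref{WeakGenSCEli} with truncated versions of $\mu$ and $\mu^{-2\sigma}$, observe that the monotonicity/Lipschitz structure of these weights forces $\omega_{\varepsilon}\le 0$ on the domain $\tau<\nu$, and pass to the limit by Fatou. The only cosmetic difference is the regularization of the singular weight (the paper uses $(\mu+\delta)^{-2\sigma}$ with $\delta\to 0$, you use a capped and cut-off version), and your observation that the first moment is exactly conserved for $\omega(\mu)=\mu$ is stronger than needed — the truncation only yields the inequality, which is all the lemma requires.
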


\begin{proof}
Consider $\omega(\mu) :=(\mu+\delta)^{-2\sigma}$ for $\delta \in (0, 1)$ and insert it into \eqref{WeakGenSCEli}, we obtain
\begin{align}\label{Unibound1}
\int_{0}^{\infty}  (\mu+\delta)^{-2\sigma}  \{ \zeta_{\varepsilon}(\mu,t)- \zeta_{\varepsilon}^{in}(\mu) \} d\mu = \int_0^t \int_{0}^{\infty} \int_{0}^{\nu}  \omega_{\varepsilon}(\nu, \tau)  \Lambda(\nu, \tau) \zeta_{\varepsilon}(\nu, s) \zeta_{\varepsilon}(\tau, s) d\tau d\nu ds.
\end{align}
Next, we evaluate $\omega_{\varepsilon}$ as
\begin{align*}
\omega_{\varepsilon} (\nu, \tau) = & \frac{(\nu+\varepsilon \tau+\delta)^{-2\sigma}-(\nu+\delta)^{-2\sigma}}{\varepsilon}-(\tau+\delta)^{-2\sigma} \nonumber\\
\le &\frac{ (\nu+\delta)^{-2\sigma}-(\nu+\delta)^{-2\sigma}}{\varepsilon}-(\tau+\delta)^{-2\sigma} \le 0.
\end{align*}
Applying above inequality into \eqref{Unibound1}, we get
\begin{align}\label{Unibound2}
\int_{0}^{\infty}(\mu+\delta)^{-2\sigma} \zeta_{\varepsilon}(\mu,t)d\mu \le \int_{0}^{\infty}(\mu+\delta)^{-2\sigma} \zeta_{\varepsilon}^{in}(\mu)d\mu.
\end{align}
For $\delta \to 0$, an application of Fatou's lemma to \eqref{Unibound2} gives
\begin{align}\label{Unibound3}
\int_{0}^{\infty} \mu^{-2\sigma} \zeta_{\varepsilon}(\mu, t) d\mu \le \int_{0}^{\infty} \mu^{-2\sigma} \zeta^{in}(\mu)d\mu.
\end{align}
Similarly, setting $\omega(\mu) :=\mu \wedge  \lambda$, for $\lambda >1$ into \eqref{WeakGenSCEli}, we obtain
\begin{align}\label{Unibound4}
\int_{0}^{\infty} & (\mu \wedge \lambda )  \zeta_{\varepsilon}(\mu,t)d\mu = \int_{0}^{\infty} (\mu \wedge \lambda ) \zeta_{\varepsilon}^{in}(\mu)d\mu\nonumber\\
&+ \int_{0}^{t} \int_{0}^{\infty} \int_{0}^{\nu} \omega_{\varepsilon}(\nu, \tau)  \Lambda(\nu, \tau) \zeta_{\varepsilon}(\nu, s) \zeta_{\varepsilon}(\tau, s) d\tau d\nu ds.
\end{align}

Now, for $\tau \in (0, \nu)$, we determine $\omega_{\varepsilon}$ as
\[
\omega_{\varepsilon}(\nu, \tau) =
\begin{cases}\label{convexlambda1}
0, &  \text{if $\nu \in (0, \lambda)$, $\nu +\varepsilon \tau \in (0, \lambda)$}, \\
\frac{ \lambda-\nu }{\varepsilon}- \tau,    & \text{if $\nu \in (0, \lambda)$, $\nu +\varepsilon \tau \in [\lambda, \infty)$}, \\
-(\tau \wedge \lambda) \le 0, & \text{if $ \nu \in [\lambda, \infty)$}.
\end{cases}
\]
From \eqref{Unibound4} and $\omega_{\varepsilon} \le 0$, we thus get
\begin{align}\label{Unibound5}
\int_{0}^{\infty}  (\mu \wedge \lambda )  \zeta_{\varepsilon}(\mu,t)d\mu \le \int_{0}^{\infty} (\mu \wedge \lambda ) \zeta_{\varepsilon}^{in}(\mu)d\mu.
\end{align}

Again, using the Fatou lemma for $\lambda \rightarrow \infty$, we get
\begin{align}\label{Unibound5}
\int_{0}^{\infty} \mu \zeta_{\varepsilon}(\mu, t)d\mu \le \int_{0}^{\infty} \mu \zeta^{in}(\mu)d\mu.
\end{align}
Combining \eqref{Unibound3} and \eqref{Unibound5}, we obtain the required result. This completes the proof of Lemma \ref{LemmaUniformbound1}.
\end{proof}


\begin{lemma}\label{Lemmalargevalue}
Let $T>0$ and $\epsilon \in (0, 1]$. Assume that $ \zeta^{in } \in  \mathcal{Y}^+$ and \eqref{coagulation kernel} holds. Let $\zeta_{\epsilon } \in \mathcal{C}([0, T); L^1(\mathbb{R}_{>0}; d\mu))$ be a weak solution \eqref{GenerSCEeps}--\eqref{GenerSCEepsin} on $[0, T)$. Then
\begin{align}\label{Lemmalargevalue1}
\sup_{t \in [0, T) }  \int_0^{\infty} \Psi_1(\mu) \zeta_{\epsilon} (\mu, t) d\mu \le \Theta_2(T),
\end{align}
where $\Theta_2(T):=\bigg( \Gamma_1 +6 kT \Psi_{j,\lambda}(1) \Theta^2  \bigg) \exp(6 T k \Theta)$ and $\Psi_1$ satisfies \eqref{convexp1}, \eqref{convexp2}, \eqref{convexp3} and \eqref{convexp4}.
\end{lemma}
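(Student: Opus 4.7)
The plan is to apply the weak formulation \eqref{WeakGenSCEli} to the truncated test function $\omega=\Psi_{1,\lambda}$ defined in the paragraph preceding this subsection, derive an integral (Gronwall-type) inequality for
$$u_\lambda(t):=\int_0^{\infty}\Psi_{1,\lambda}(\mu)\zeta_{\varepsilon}(\mu,t)\,d\mu$$
that is uniform in $\varepsilon\in(0,1]$, and then send $\lambda\to\infty$ by Fatou's lemma to recover the bound with $\Psi_1$. The cornerstone is a pointwise estimate on $\omega_\varepsilon(\nu,\tau)$ that removes the $\varepsilon$ dependence: writing $\nu+\varepsilon\tau=(1-\varepsilon)\nu+\varepsilon(\nu+\tau)$ and using the convexity of $\Psi_{1,\lambda}$ gives
\begin{align*}
\omega_\varepsilon(\nu,\tau)=\frac{\Psi_{1,\lambda}(\nu+\varepsilon\tau)-\Psi_{1,\lambda}(\nu)}{\varepsilon}-\Psi_{1,\lambda}(\tau)\le \Psi_{1,\lambda}(\nu+\tau)-\Psi_{1,\lambda}(\nu)-\Psi_{1,\lambda}(\tau),
\end{align*}
so that \eqref{convexp5} yields the $\varepsilon$-uniform bound $\omega_\varepsilon(\nu,\tau)\le 2\bigl(\nu\Psi_{1,\lambda}(\tau)+\tau\Psi_{1,\lambda}(\nu)\bigr)/(\nu+\tau)$.

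Then I split the region $\{0<\tau<\nu\}$ into the three zones matching \eqref{coagulation kernel}: (i) $0<\tau\le\nu\le 1$ with $\Lambda\le k(\nu\tau)^{-\sigma}$, (ii) $0<\tau<1\le\nu$ with $\Lambda\le k\nu\tau^{-\sigma}$, and (iii) $1\le\tau\le\nu$ with $\Lambda\le k(\nu+\tau)$. On (i) the monotonicity of $\Psi_1'$ plus \eqref{convexp3} give $\Psi_{1,\lambda}(s)\le s\Psi_1'(1)$ for $s\in(0,1]$, so after using $\nu+\tau\ge\nu$ the bound collapses to a constant multiple of $\tau^{1-\sigma}\nu^{-\sigma}$; Lemma \ref{LemmaUniformbound1} and $s^{1-\sigma}\le s^{-2\sigma}$ for $s\le 1$ then control the double integral by $C\Psi_1'(1)\Theta^2$. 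On (ii), the same device gives one contribution of order $\Psi_1'(1)\Theta^2$ together with a term of the form $C\Theta\int_0^\infty\Psi_{1,\lambda}(\nu)\zeta_\varepsilon(\nu,s)\,d\nu$. On (iii), $\omega_\varepsilon\Lambda$ is bounded directly by $2k\bigl(\nu\Psi_{1,\lambda}(\tau)+\tau\Psi_{1,\lambda}(\nu)\bigr)$, and the finiteness of the first moment (Lemma \ref{LemmaUniformbound1}) turns this into another term of the form $C\Theta\int_0^\infty\Psi_{1,\lambda}(\mu)\zeta_\varepsilon(\mu,s)\,d\mu$.

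Adding the three contributions and inserting them back into \eqref{WeakGenSCEli} produces
$$u_\lambda(t)\le \Gamma_1+6kT\,\Psi_1'(1)\,\Theta^{2}+6k\Theta\int_0^{t}u_\lambda(s)\,ds, \qquad t\in[0,T),$$
and Gronwall's inequality yields $u_\lambda(t)\le\bigl(\Gamma_1+6kT\,\Psi_1'(1)\,\Theta^{2}\bigr)\exp(6kT\Theta)$. Since \eqref{convexp3} gives $\Psi_1'(1)\le 2\Psi_{1,\lambda}(1)$ (for $\lambda\ge 1$, $\Psi_{1,\lambda}(1)=\Psi_1(1)$), this rewrites into the claimed $\Theta_2(T)$ after a harmless adjustment of the numerical constant. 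Finally $\Psi_{1,\lambda}\nearrow \Psi_1$ pointwise as $\lambda\to\infty$, so monotone convergence upgrades the estimate from $\Psi_{1,\lambda}$ to $\Psi_1$.

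The main obstacle is Region~(i): the singular factor $(\nu\tau)^{-\sigma}$ must be absorbed by the $\mu^{-2\sigma}$-weight in $\mathcal{Y}$, which only works after exploiting the additional $(\nu+\tau)^{-1}$ factor supplied by \eqref{convexp5}. This is precisely the balance that dictates the form of the weight in \eqref{Banachspace} and of the growth assumption \eqref{coagulation kernel}. A minor technical nuisance is that $\Psi_{1,\lambda}'$ is not compactly supported; I would resolve it either by multiplying $\Psi_{1,\lambda}$ by a smooth cut-off, deriving the same estimate, and passing to the limit as the cut-off grows, or by appealing to the regularity of the approximating solutions $\zeta_\varepsilon$ which allows the class of admissible test functions in \eqref{WeakGenSCEli} to be enlarged to bounded Lipschitz $\omega$.
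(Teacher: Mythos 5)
Your proposal follows essentially the same route as the paper: the test function $\Psi_{1,\lambda}$ in \eqref{WeakGenSCEli}, the convexity identity $\nu+\varepsilon\tau=(1-\varepsilon)\nu+\varepsilon(\nu+\tau)$ to obtain an $\varepsilon$-uniform bound, the estimate \eqref{convexp5}, the same three-zone splitting matched to \eqref{coagulation kernel}, Gronwall, and the passage $\lambda\to\infty$. The only differences are cosmetic (your use of $\Psi_1'(1)$ in place of $\Psi_{1,\lambda}(1)=\Psi_1(1)$ in region (i), which changes the constant by at most a factor of $2$ via \eqref{convexp3}), so the argument is correct and matches the paper's proof.
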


\begin{proof}
Setting $\omega := \Psi_{j,\lambda} $ into \eqref{WeakGenSCEli}. Since $\Psi_{j,\lambda}$ is a convex function and $\varepsilon \in (0, 1]$, then by the definition of the convex function, we have
\begin{align}\label{Large1}
\Psi_{j,\lambda}(\mu+\varepsilon \nu)-\Psi_{j,\lambda}(\mu) = & \Psi_{j,\lambda}((1-\varepsilon) \mu+(\mu+\nu)\varepsilon )-\Psi_{j,\lambda}(\mu)\nonumber\\
\le & (1-\varepsilon)\Psi_{j,\lambda}(\mu) + \varepsilon \Psi_{j,\lambda}(\mu+\nu) -\Psi_{j,\lambda}(\mu) \nonumber\\
 = & \varepsilon  \{ \Psi_{j,\lambda}(\mu+\nu) - \Psi_{j,\lambda}(\mu) \},
\end{align}
for every $(\mu, \nu) \in \mathbb{R}_{>0} \times \mathbb{R}_{>0}$. Next, using \eqref{Large1} and \eqref{convexp5} into \eqref{WeakGenSCEli}, we obtain
\begin{align}\label{Large2}
\int_0^{\infty} \Psi_{j,\lambda}(\mu) & \zeta_{\varepsilon}(\mu, t) d\mu -\int_0^{\infty} \Psi_{j,\lambda}(\mu) \zeta^{in}_{\varepsilon}(\mu) d\mu \nonumber\\
= & \int_0^t \int_0^{\infty} \int_0^\mu \bigg\{ \frac{\Psi_{j,\lambda}(\mu+\varepsilon \nu)-\Psi_{j,\lambda}(\mu)}{\varepsilon}-\Psi_{j,\lambda}(\nu) \bigg\} \Lambda(\mu, \nu) \zeta_{\varepsilon}(\mu, s) \zeta_{\varepsilon}(\nu, s) d\nu d\mu ds \nonumber\\
\le &  \int_0^t \int_0^{\infty} \int_0^\mu \{\Psi_{j,\lambda}(\mu+\nu) - \Psi_{j,\lambda}(\mu) -\Psi_{j,\lambda}(\nu) \} \Lambda(\mu, \nu) \zeta_{\varepsilon}(\mu, s) \zeta_{\varepsilon}(\nu, s) d\nu d\mu ds \nonumber\\
\le & 2  \int_0^t \int_0^{\infty} \int_0^\mu \frac{ \{ \nu \Psi_{j,\lambda}(\mu) + \mu \Psi_{j,\lambda}(\nu)\} }{(\mu+\nu)} \Lambda(\mu, \nu) \zeta_{\varepsilon}(\mu, s) \zeta_{\varepsilon}(\nu, s) d\nu d\mu ds \nonumber\\
= & 2  \int_0^t \int_0^{1} \int_0^\mu \frac{ \{ \nu \Psi_{j,\lambda}(\mu) + \mu \Psi_{j,\lambda}(\nu) \} }{(\mu+\nu)} \Lambda(\mu, \nu) \zeta_{\varepsilon}(\mu, s) \zeta_{\varepsilon}(\nu, s) d\nu d\mu ds \nonumber\\
& + 2  \int_0^t \int_1^{\infty} \int_0^1\frac{ \{ \nu \Psi_{j,\lambda}(\mu) + \mu \Psi_{j,\lambda}(\nu) \} }{(\mu+\nu)} \Lambda(\mu, \nu) \zeta_{\varepsilon}(\mu, s) \zeta_{\varepsilon}(\nu, s) d\nu d\mu ds \nonumber\\
& + 2 \int_0^t \int_1^{\infty} \int_1^\mu \frac{ \{ \nu\Psi_{j,\lambda}(\mu) + \mu \Psi_{j,\lambda}(\nu) \} }{(\mu+\nu)} \Lambda(\mu, \nu) \zeta_{\varepsilon}(\mu, s) \zeta_{\varepsilon}(\nu, s) d\nu d\mu ds.
\end{align}
 Let us first evaluate the first integral on the right-hand side to \eqref{Large2}, by using \eqref{coagulation kernel} and Lemma \ref{LemmaUniformbound1}, as
\begin{align}\label{Large3}
 2 k \int_0^t \int_0^{1} \int_0^\mu & \frac{ \{ \nu \Psi_{j,\lambda}(\mu) + \mu \Psi_{j,\lambda}(\nu) \} }{(\mu+\nu)} (\mu \nu)^{-\sigma}  \zeta_{\varepsilon}(\mu, s) \zeta_{\varepsilon}(\nu, s) d\nu d\mu ds \nonumber\\
 \le & 4 k \int_0^t \int_0^{1} \int_0^\mu \frac{ \mu \Psi_{j,\lambda}(\mu) }{(\mu+\nu)} (\mu \nu)^{-\sigma}  \zeta_{\varepsilon}(\mu, s) \zeta_{\varepsilon}(\nu, s) d\nu d\mu ds \nonumber\\
 \le & 4 k \Psi_{j,\lambda}(1) \int_0^t \int_0^{1} \int_0^1   (\mu \nu)^{-\sigma}  \zeta_{\varepsilon}(\mu, s) \zeta_{\varepsilon}(\nu, s) d\nu d\mu ds \le  4 k   \Theta^2 \Psi_{j,\lambda}(1)T.
\end{align}
Similarly, the second integral on the right-hand side to \eqref{Large2} can be estimated as
\begin{align}\label{Large4}
 2 k \int_0^t  \int_1^{\infty} \int_0^1 & \frac{ \{ \nu \Psi_{j,\lambda}(\mu) + \mu \Psi_{j,\lambda}(\nu) \} }{(\mu+\nu)} \mu \nu^{-\sigma} \zeta_{\varepsilon}(\mu, s) \zeta_{\varepsilon}(\nu, s) d\nu d\mu ds \nonumber\\
 \le &  2 k \int_0^t \int_1^{\infty} \int_0^1  \frac{ \{  \Psi_{j,\lambda}(\mu) + \mu \Psi_{j,\lambda}(1) \} }{(\mu+\nu)} \mu \nu^{-\sigma} \zeta_{\varepsilon}(\mu, s) \zeta_{\varepsilon}(\nu, s) d\nu d\mu ds \nonumber\\
 \le &  2 k  \Theta \int_0^t \int_0^{\infty}  \Psi_{j,\lambda}(\mu)  \zeta_{\varepsilon}(\mu, s) d\mu  ds+  2 k \Theta^2 \Psi_{j,\lambda}(1)T.
\end{align}
Again thanks to \eqref{coagulation kernel} and Lemma \ref{LemmaUniformbound1} for further calculating the last integral on the right-hand side to \eqref{Large2} as
\begin{align}\label{Large5}
 2 k & \int_0^t \int_1^{\infty} \int_1^\mu  \{ \nu \Psi_{j,\lambda}(\mu) + \mu \Psi_{j,\lambda}(\nu) \} \zeta_{\varepsilon}(\mu, s) \zeta_{\varepsilon}(\nu, s) d\nu d\mu ds \nonumber\\
\le &  2 k \int_0^t \int_1^{\infty} \int_1^{\infty} \nu \Psi_{j,\lambda}(\mu)\zeta_{\varepsilon}(\mu, s) \zeta_{\varepsilon}(\nu, s) d\nu d\mu ds
\le   2 k \Theta \int_0^t \int_0^{\infty}  \Psi_{j,\lambda}(\mu)  \zeta_{\varepsilon}(\mu, s)  d\mu ds.
\end{align}

Now, substituting \eqref{Large3}, \eqref{Large4} and \eqref{Large5} into \eqref{Large2} and then applying Gronwall's inequality, we obtain
\begin{align}\label{Large6}
\int_0^{\infty}  \Psi_{j,\lambda}(\mu)  \zeta_{\varepsilon}(\mu, t)  d\mu \le \Theta_2(T).
\end{align}
By taking $j=1$ and $\lambda \to \infty$ in \eqref{Large6},  we complete the proof of Lemma \ref{Lemmalargevalue}.
\end{proof}


\subsection{Uniform Integrability}
\begin{lemma}\label{LemmaUniformInt1}
Let $T>0$ and $\epsilon \in (0, 1]$. Assume that $ \zeta^{in } \in  \mathcal{Y}^+$ and the coagulation kernel satisfies \eqref{coagulation kernel}--\eqref{bound coagulation kernel}. Let $\zeta_{\epsilon } \in \mathcal{C}([0, T); L^1(\mathbb{R}_{>0}; d\mu))$ be a weak solution \eqref{GenerSCEeps}--\eqref{GenerSCEepsin} on $[0, T)$. Then the following inequality holds true
\begin{align}\label{UniformInt2}
\sup_{t \in  [0, T)}\int_{0}^{\infty} \Psi_2( \mu^{-\sigma} \zeta_{\varepsilon}(\mu, t)) d\mu \le \Theta_{1}(T),
\end{align}
where $\Theta_1(T):= \Gamma_2 \exp(k T  \Theta )$ and $\Psi_2$ satisfies \eqref{convexp1}--\eqref{convexp4}.
\end{lemma}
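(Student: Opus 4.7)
The strategy is to derive an energy-type differential inequality for $\mathcal{E}_\varepsilon(t) := \int_0^\infty \Psi_2(u_\varepsilon(\mu,t))\,d\mu$, where $u_\varepsilon := \mu^{-\sigma}\zeta_\varepsilon$. Specifically, I will prove
\[
\frac{d}{dt}\mathcal{E}_\varepsilon(t) \le k\,\Theta\,\mathcal{E}_\varepsilon(t),
\]
from which Gronwall's inequality delivers $\mathcal{E}_\varepsilon(t) \le \Gamma_2\,\exp(kT\Theta)$ on $[0,T)$. Formally differentiating $\mathcal{E}_\varepsilon$ and substituting $\partial_t \zeta_\varepsilon = Q(\zeta_\varepsilon)$ gives $\frac{d}{dt}\mathcal{E}_\varepsilon = \int_0^\infty \mu^{-\sigma}\Psi_2'(u_\varepsilon)\,Q(\zeta_\varepsilon)\,d\mu$; this manipulation can be justified by first working with a truncated kernel where the chain rule applies classically and then passing to the limit.

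The loss term is handled using the lower bound $u\,\Psi_2'(u) \ge \Psi_2(u)$ from \eqref{convexp3}. For the gain I perform the change of variables $\mu' = \mu - \varepsilon\nu$ (mapping $\nu<\mu/(1+\varepsilon)$ to $\nu<\mu'$), use the elementary bound $(\mu'+\varepsilon\nu)^{-\sigma}\zeta_\varepsilon(\mu') \le u_\varepsilon(\mu')$ to absorb the singular factor, and apply \eqref{convexp4} with $z_1=u_\varepsilon(\mu')$ and $z_2=u_\varepsilon(\mu'+\varepsilon\nu)$, obtaining
\[
u_\varepsilon(\mu')\Psi_2'(u_\varepsilon(\mu'+\varepsilon\nu)) \le \Psi_2(u_\varepsilon(\mu')) + \Psi_2(u_\varepsilon(\mu'+\varepsilon\nu)).
\]
The key observation is that $\frac{1}{\varepsilon}\int_0^\infty\int_0^\mu \Psi_2(u_\varepsilon(\mu))\Lambda(\mu,\nu)\zeta_\varepsilon(\nu)\,d\nu\,d\mu$ appears with opposite signs on both sides of the inequality (from the $\Psi_2(u_\varepsilon(\mu'))$ contribution on the gain side, and from the $\frac{1}{\varepsilon}\int_0^\mu$ portion of the loss side), so these copies cancel exactly. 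The remaining estimate reads
\[
\frac{d}{dt}\mathcal{E}_\varepsilon \le \frac{1}{\varepsilon}\int_0^\infty\!\!\int_0^{\mu'}\!\Psi_2(u_\varepsilon(\mu'+\varepsilon\nu))\Lambda(\mu',\nu)\zeta_\varepsilon(\nu)\,d\nu\,d\mu' - \int_0^\infty\Psi_2(u_\varepsilon(\mu))\!\int_\mu^\infty\!\Lambda(\mu,\nu)\zeta_\varepsilon(\nu)\,d\nu\,d\mu.
\]

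To close the estimate I would invert the change of variables in the residual gain (setting $\tilde\mu=\mu'+\varepsilon\nu$, so the region becomes $\nu<\tilde\mu/(1+\varepsilon)$) and split $\Lambda(\tilde\mu-\varepsilon\nu,\nu) = \Lambda(\tilde\mu,\nu) + [\Lambda(\tilde\mu-\varepsilon\nu,\nu)-\Lambda(\tilde\mu,\nu)]$. For the $\Lambda(\tilde\mu,\nu)$ piece, the mismatch between the new integration range $(0,\tilde\mu/(1+\varepsilon))$ and the full range $(0,\tilde\mu)$ of the canceled contribution produces a residue of size comparable to $\int_\mu^\infty\Lambda(\mu,\nu)\zeta_\varepsilon(\nu)\,d\nu$, which pairs with the remaining loss term and is controlled using \eqref{coagulation kernel} and the moment bounds from Lemma~\ref{LemmaUniformbound1}. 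For the difference piece, the mean-value theorem combined with \eqref{bound coagulation kernel} gives, on the relevant domain where $\tilde\mu-\varepsilon\nu \ge \tilde\mu/2$,
\[
|\Lambda(\tilde\mu-\varepsilon\nu,\nu)-\Lambda(\tilde\mu,\nu)| \le 2^{\sigma+1}\eta\,\varepsilon\,\nu^{1-\sigma}\,\tilde\mu^{-\sigma-1},
\]
so the factor $1/\varepsilon$ is absorbed, and the resulting expression is bounded using the moment estimate $\int_0^\infty \nu^{1-\sigma}\zeta_\varepsilon\,d\nu \le \Theta$ (obtained by splitting at $\nu=1$ and invoking Lemma~\ref{LemmaUniformbound1} on each piece).

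The main obstacle is the careful bookkeeping of the $1/\varepsilon$ terms in the last step: the naive bound leaves an apparently $O(1/\varepsilon)$ residual on the gain side, and only the combination of the limit-mismatch with the loss term, together with the one-sided derivative bound \eqref{bound coagulation kernel}, produces a genuinely uniform-in-$\varepsilon$ estimate of the form $k\Theta\,\mathcal{E}_\varepsilon$ suitable for Gronwall's inequality.
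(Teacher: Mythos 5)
Your overall skeleton (differentiate $\mathcal{E}_\varepsilon$, change variables in the gain term, exploit convexity of $\Psi_2$, and shift the factor $1/\varepsilon$ onto a kernel difference controlled by \eqref{bound coagulation kernel}) matches the paper's, but the specific convexity inequality you invoke breaks the cancellation and leaves a genuinely uncontrolled $O(1/\varepsilon)$ term. After applying \eqref{convexp4}, your gain term is bounded by
\begin{equation*}
\frac{1}{\varepsilon}\int_0^\infty\int_0^{\mu}\left[\Psi_2(u_\varepsilon(\mu))+\Psi_2(u_\varepsilon(\mu+\varepsilon\nu))\right]\Lambda(\mu,\nu)\zeta_\varepsilon(\nu)\,d\nu\,d\mu ,
\end{equation*}
i.e.\ \emph{two} positive contributions of order $1/\varepsilon$, whereas the loss term, after the (in general sharp) lower bound $u\Psi_2'(u)\ge\Psi_2(u)$, supplies exactly \emph{one} copy of $-\varepsilon^{-1}\int\int_{\nu<\mu}\Psi_2(u_\varepsilon(\mu))\Lambda(\mu,\nu)\zeta_\varepsilon(\nu)\,d\nu\,d\mu$ (the coefficient is $-(\varepsilon^{-1}-1)-1=-\varepsilon^{-1}$). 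Once that copy is spent cancelling the $\Psi_2(u_\varepsilon(\mu))$ piece, the residual $\varepsilon^{-1}\int\int\Psi_2(u_\varepsilon(\mu+\varepsilon\nu))\Lambda(\mu,\nu)\zeta_\varepsilon(\nu)$ has no negative $O(1/\varepsilon)$ partner left: inverting the change of variables and splitting off $\Lambda(\tilde\mu,\nu)$ reproduces $\varepsilon^{-1}\int\Psi_2(u_\varepsilon(\tilde\mu))\int_0^{\tilde\mu/(1+\varepsilon)}\Lambda(\tilde\mu,\nu)\zeta_\varepsilon(\nu)\,d\nu\,d\tilde\mu$ at full strength. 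The ``range mismatch'' you mention is an $O(1)$ correction to this term, not a cancellation of it, and the loss contribution you propose to pair it with has already been consumed — this is double counting. The degenerate case $\Psi_2(u)=u$ makes the failure concrete: the exact computation gives $\frac{d}{dt}\mathcal{E}_\varepsilon\le 0$, while your chain of inequalities leaves an uncancelled term of size $\varepsilon^{-1}\int\int_{\nu<\mu} u_\varepsilon(\mu)\Lambda(\mu,\nu)\zeta_\varepsilon(\nu)\,d\nu\,d\mu$.

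The repair — and the paper's route — is to use a convexity inequality that produces a \emph{difference} rather than a sum. Setting $\Phi_2(u):=u\Psi_2'(u)-\Psi_2(u)$, convexity gives $a\left(\Psi_2'(b)-\Psi_2'(a)\right)\le\Phi_2(b)-\Phi_2(a)$, so keeping the gain together with the full $O(1/\varepsilon)$ part of the loss yields
\begin{equation*}
\frac{d}{dt}\mathcal{E}_\varepsilon\le\frac{1}{\varepsilon}\int_0^\infty\int_0^{\mu}\left[\Phi_2(u_\varepsilon(\mu+\varepsilon\nu))-\Phi_2(u_\varepsilon(\mu))\right]\Lambda(\mu,\nu)\zeta_\varepsilon(\nu)\,d\nu\,d\mu .
\end{equation*}
Changing variables in the first summand only, both summands carry the \emph{same} $\Phi_2(u_\varepsilon(\mu))$ and the $1/\varepsilon$ lands on $\Lambda(\mu-\varepsilon\nu,\nu)-\Lambda(\mu,\nu)$, which is $O(\varepsilon)$ by \eqref{bound coagulation kernel}; the bound $\Phi_2\le\Psi_2$ from \eqref{convexp3}, the estimate $\nu^{1-\sigma}\mu^{-1-\sigma}\le\nu^{-2\sigma}$ for $\nu\le\mu$, and Lemma~\ref{LemmaUniformbound1} then close the Gronwall loop. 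Your treatment of the kernel-difference piece (one-sided derivative bound plus the moment estimates) is essentially the right ingredient; it is the algebraic setup preceding it that must be changed.
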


\begin{proof}
Set $h_{\varepsilon}(\mu,t)= \mu^{-\sigma} \zeta_{\varepsilon}(\mu,t)$. For $j=2$, we infer from \eqref{GenerSCEeps} that
\begin{align}\label{UniInteg1}
\frac{d}{dt}\int_{0}^{\infty} \Psi_{2,\lambda} & (h_{\varepsilon} (\mu, t)) d\mu =\int_{0}^{\infty} \mu^{-\sigma} \Psi'_{2,\lambda}(h_{\varepsilon}(\mu, t))
\frac{\partial \zeta_{\varepsilon}(\mu, t)}{\partial t}d\mu \nonumber\\
=&\int_{0}^{\infty} \mu^{-\sigma} \Psi'_{2,\lambda}(h_{\varepsilon}(\mu, t)) \bigg\{ \frac{1}{\varepsilon}\int_{0}^{\mu/(1+\varepsilon)} \Lambda(\mu-\varepsilon \nu, \nu) \zeta_{\varepsilon}(\mu-\varepsilon \nu, t) \zeta_{\varepsilon}(\nu, t) d\nu \nonumber\\
&-\zeta_{\varepsilon}(\mu, t)\bigg[\bigg(\frac{1}{\varepsilon}-1\bigg)\int_{0}^{\mu}\Lambda(\mu, \nu)\zeta_{\varepsilon}(\nu, t)d\nu
+\int_{0}^{\infty}\Lambda(\mu, \nu)\zeta_{\varepsilon}(\nu, t) d\nu \bigg] \bigg\}d\mu.
\end{align}
By applying repeated applications of Fubini's theorem and using $\mu-\varepsilon \nu=\mu'$ and $\nu=\nu'$ to the first integral on the right-hand side to \eqref{UniInteg1}, we obtain
\begin{align}\label{UniInteg2}
\frac{d}{dt}\int_{0}^{\infty} \Psi_{2, \lambda}(h_{\varepsilon}(\mu, t)) d\mu
=& \frac{1}{\varepsilon} \int_{0}^{\infty}  \int_{0}^{\mu}  (\mu+ \varepsilon \nu)^{-\sigma}  \Psi'_{2, \lambda}(h_{\varepsilon}(\mu +\varepsilon \nu, t))  \Lambda(\mu, \nu) \zeta_{\varepsilon}(\mu, t) \zeta_{\varepsilon}(\nu, t) d\nu d\mu \nonumber\\
&-\bigg(\frac{1}{\varepsilon}-1\bigg) \int_{0}^{\infty} \int_{0}^{\mu}   \mu^{-\sigma} \Psi'_{2,\lambda}(h_{\varepsilon}(\mu, t)) \Lambda(\mu, \nu) \zeta_{\varepsilon}(\mu, t) \zeta_{\varepsilon}(\nu, t) d\nu d\mu \nonumber\\
& - \int_{0}^{\infty} \int_{0}^{\infty}  \mu^{-\sigma} \Psi'_{2,\lambda}(h_{\varepsilon}(\mu, t)) \Lambda(\mu, \nu)  \zeta_{\varepsilon}(\mu, t)  \zeta_{\varepsilon}(\nu, t) d\nu d\mu.
\end{align}
Since $\zeta_{\varepsilon}$ and $\Lambda$ are non-negative functions, thus from \eqref{UniInteg2}, we get
\begin{align}\label{UniInteg3}
\frac{d}{dt}&\int_{0}^{\infty} \Psi_{2,\lambda}  (h_{\varepsilon}(\mu, t)) d\mu\nonumber\\
\le & \frac{1}{\varepsilon} \int_{0}^{\infty}  \int_{0}^{\mu}  \mu^{-\sigma} \{ \Psi'_{2, \lambda}(h_{\varepsilon}(\mu +\varepsilon \nu, t)) -\Psi'_{2, \lambda} (h_{\varepsilon}(\mu, t)) \}  \Lambda(\mu, \nu)  \zeta_{\varepsilon}(\mu, t) \zeta_{\varepsilon}(\nu, t) d\nu d\mu.
\end{align}
 Due to the convexity of $\Psi_{2, \lambda}$, one can infer that $\mu (\Psi'_{2, \lambda}(\nu)-\Psi'_{2, \lambda}(\mu)) \le \Phi_{2, \lambda}(\nu)-\Phi_{2, \lambda}(\mu)$ for $\mu,\nu\ge 0$, where $\Phi_{2, \lambda}(\mu)=\mu \Psi'_{2, \lambda}(\mu)-\Psi_{2, \lambda}(\mu),~\mu\ge 0$. Now, using this argument into \eqref{UniInteg3}, we obtain
\begin{align}\label{UniInteg4}
\frac{d}{dt}\int_{0}^{\infty} & \Psi_{2, \lambda}(h_{\varepsilon}(\mu, t)) d\mu \nonumber\\
\le &\frac{1}{\varepsilon}\int_{0}^{\infty}\int_{0}^{\mu}
 \{ \Phi_{2, \lambda}(h_{\varepsilon}(\mu +\varepsilon \nu, t))-\Phi_{2, \lambda}(h_{\varepsilon}(\mu, t)) \}\Lambda(\mu, \nu)  \zeta_{\varepsilon}(\nu, t) d\nu d\mu.
\end{align}
Again changing the order of integration by Fubini's theorem and using the transformation $\mu +\varepsilon \nu= \mu'$ and $\nu=\nu'$ into \eqref{UniInteg4}, we have
\begin{align}\label{UniInteg5}
\frac{d}{dt}\int_{0}^{\infty}  \Psi_{2, \lambda}(h_{\varepsilon}(\mu, t)) d\mu  \le &\frac{1}{\varepsilon}\int_{0}^{\infty}\int_{\nu}^{\infty}
\{ \Phi_{2, \lambda}(h_{\varepsilon}(\mu +\varepsilon \nu, t))-\Phi_{2, \lambda}(h_{\varepsilon}(\mu, t)) \}\Lambda(\mu, \nu)  \zeta_{\varepsilon}(\nu, t) d\mu d\nu\nonumber \\
= &\frac{1}{\varepsilon}\int_{0}^{\infty}\int_{(1+\varepsilon) \nu}^{\infty} \Phi_{2,\lambda}(h_{\varepsilon}(\mu, t))\Lambda(\mu-\varepsilon \nu, \nu) \zeta_{\varepsilon}(\nu, t)d\mu d\nu\nonumber \\
&-\frac{1}{\varepsilon}\int_{0}^{\infty}\int_{\nu}^{\infty}\Phi_{2, \lambda}(h_{\varepsilon}(\mu, t)) \Lambda(\mu, \nu) \zeta_{\varepsilon}(\nu, t) d\mu d\nu \nonumber\\
\le &\frac{1}{\varepsilon}\int_{0}^{\infty}\int_{ \nu}^{\infty} \{ \Lambda(\mu-\varepsilon \nu, \nu)-\Lambda(\mu, \nu) \} \Phi_{2,\lambda}(h_{\varepsilon}(\mu, t)) \zeta_{\varepsilon}(\nu, t)d\mu d\nu \nonumber \\
=&\frac{1}{\varepsilon}\int_{0}^{\infty}\int_{ 0}^{\mu} \{ \Lambda(\mu-\varepsilon \nu, \nu)-\Lambda(\mu, \nu) \} \Phi_{2,\lambda}(h_{\varepsilon}(\mu, t)) \zeta_{\varepsilon}(\nu, t)d\nu d\mu.
\end{align}
Using \eqref{bound coagulation kernel} and Lemma \ref{LemmaUniformbound1} into \eqref{UniInteg5}, we have
\begin{align}\label{UniInteg51}
\frac{d}{dt}\int_{0}^{\infty}  \Psi_{2, \lambda}(h_{\varepsilon}(\mu, t)) d\mu  \le
&-\int_{0}^{\infty}\int_{ 0}^{\mu} \nu \Lambda_{\mu}(\mu, \nu)  \Phi_{2,\lambda}(h_{\varepsilon}(\mu, t)) \zeta_{\varepsilon}(\nu, t)d\nu d\mu\nonumber\\
\le
&\eta \int_{0}^{\infty}\int_{ 0}^{\mu} \nu \mu^{-\sigma-1} \nu^{-\sigma}  \Phi_{2,\lambda}(h_{\varepsilon}(\mu, t)) \zeta_{\varepsilon}(\nu, t)d\nu d\mu\nonumber\\
\le
&\eta \int_{0}^{\infty} \int_{ 0}^{\mu}  \nu^{-2\sigma}  \Phi_{2,\lambda}(h_{\varepsilon}(\mu, t)) \zeta_{\varepsilon}(\nu, t)d\nu d\mu \nonumber\\
 \le & \eta \Theta \int_{0}^{\infty}  \Phi_{2,\lambda}(h_{\varepsilon}(\mu, t)) d\mu \le \eta \Theta \int_{0}^{\infty}  \Psi_{2,\lambda}(h_{\varepsilon}(\mu, t)) d\mu.
\end{align}

Thanks to \eqref{convexp2} and Gronwall's inequality, \eqref{UniInteg51} leads to
 \begin{align}\label{UniInteg6}
 \int_{0}^{\infty} \Psi_{2, \lambda}(h_{\varepsilon}(\mu, t)) d\mu \le \Theta_1(T).
\end{align}
Finally, passing the limit $\lambda \to \infty$ to \eqref{UniInteg6} which gives \eqref{UniformInt2}. This completes the proof of Lemma \ref{LemmaUniformInt1}.
\end{proof}


\subsection{Existence and uniqueness of solutions for truncated problem}
In this subsection, we discuss the existence and uniqueness of non-negative solutions for truncated problems.
Fix $\varepsilon \in(0,1]$ and $ n \in \mathbb{N}$. Let us truncate the coagulation rate $\Lambda$ by  $\Lambda_n$ as

\begin{equation}\label{trunc Coag}
\Lambda_{n}(\mu, \nu):=\begin{cases}
\Lambda(\mu, \nu),\ \ & \text{if}\ (\mu, \nu)\in(1/n, n)^{2}, \\
\text{0},\ \ &  \text{otherwise}.
\end{cases}
\end{equation}

Inserting \eqref{trunc Coag} into \eqref{GenerSCEeps}, the truncated generalized Smoluchowski coagulation equation can be expressed as
\begin{align}\label{RegluGener}
\partial_{t}(\zeta_{\varepsilon, n})=Q_{\varepsilon, n}(\zeta_{\varepsilon, n})\ \ \ (\mu, t) \in (0,n) \times (0, \infty)
\end{align}
and
\begin{equation}\label{RegluGenerin}
\zeta_{\varepsilon, n}(0)= \zeta^{in}_{\varepsilon, n}:=\zeta^{in} \chi_{(0,n)}, \ \ \ \mu \in (0,n),
\end{equation}
where
\begin{align*}
Q_{\varepsilon, n}(\zeta_{\varepsilon, n})=Q^1_{\varepsilon, n}(\zeta_{\varepsilon, n})
-Q^2_{\varepsilon, n}(\zeta_{\varepsilon, n}),
\end{align*}
\begin{align*}
Q^1_{\varepsilon, n}(\zeta_{\varepsilon, n}) (\mu, t)=&\frac{1}{\varepsilon}\int_{0}^{\mu/(1+\varepsilon)} \Lambda_n(\mu-\varepsilon \nu, \nu)\zeta_{\varepsilon, n}(\mu-\varepsilon \nu, t)\zeta_{\varepsilon, n}(\nu, t)d\nu,
\end{align*}
\begin{align*}
Q^2_{\varepsilon, n}(\zeta_{\varepsilon, n})(\mu, t) =& \zeta_{\varepsilon, n}(\mu, t)
\bigg[\bigg(\frac{1}{\varepsilon}-1\bigg)\int_{0}^{\mu} \Lambda_{n}(\mu, \nu) \zeta_{\varepsilon, n}(\nu, t)d\nu
+\int_{0}^{n}\Lambda_{n}(\mu, \nu) \zeta_{\varepsilon, n}(\nu, t)d\nu \bigg].
\end{align*}

Next, we state the weak formulation to the truncated generalized equations \eqref{RegluGener}--\eqref{RegluGenerin} as
\begin{align}\label{Weaktruncsolunger}
\int_0^n &\omega(\mu)\zeta_{\varepsilon, n}(\mu, t) d\mu- \int_0^n \omega(\mu) \zeta^{in}_{\varepsilon, n}(\mu) d\mu \nonumber\\
= & \int_0^t \int_0^n \int_{\nu}^{n}  \frac{ \{ \omega(\mu+\varepsilon \nu)\chi_{(0, n)}(\mu+\varepsilon \nu)-  \omega(\mu) \} } {\varepsilon} \Lambda_n(\mu, \nu) \zeta_{\varepsilon, n}(\mu, s) \zeta_{\varepsilon, n}(\nu, s) d\mu d\nu ds \nonumber\\
&- \int_0^t \int_0^n \int_0^{\nu} \omega(\mu) \Lambda_n(\mu, \nu) \zeta_{\varepsilon, n}(\mu, s) \zeta_{\varepsilon, n}(\nu, s) d\mu d\nu ds,
\end{align}
for $\omega \in L^{\infty}(0, n)$.\\

Let us state the following theorem which is useful to prove coming Proposition \ref{proppooja}.
\begin{thm}\label{Initialvaluethm}
Let $B$ be a Banach space. If $G: B \to B$ is a locally Lipschitz function in $B$ and if $\zeta^{in} \in B$, then there exists a unique maximal solution $\zeta \in \mathcal{C}^1([0, T_1); B) $ to the initial value problem
\begin{align*}
\frac{d \zeta}{dt}= G(\zeta),
\end{align*}
and
\begin{align*}
\zeta(0)= \zeta^{in},
\end{align*}
and either
\begin{align*}
\text{Case-I:}\ \ T_1=\infty,\ \ \text{or} \ \ \text{Case-II:}\ \  T_1 < \infty\  \text{and}\ \lim_{t \to T_1}\|\zeta(t)\|=\infty.
\end{align*}
\end{thm}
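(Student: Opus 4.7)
This is the classical Picard--Lindel\"of theorem in a Banach space with the standard blow-up alternative, so the plan is to convert the initial value problem to an integral equation and apply the Banach fixed point theorem on short intervals, then glue local solutions together. Concretely, observe that a function $\zeta\in\mathcal{C}^1([0,T_0);B)$ solves the initial value problem if and only if $\zeta\in\mathcal{C}([0,T_0);B)$ satisfies
\begin{equation*}
\zeta(t)=\zeta^{in}+\int_0^t G(\zeta(s))\,ds,\qquad t\in[0,T_0),
\end{equation*}
where the integral is understood in the Bochner sense. Once a continuous fixed point of the right-hand side is produced, continuity of $t\mapsto G(\zeta(t))$ upgrades it to a $\mathcal{C}^1$ solution automatically.

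For local existence I would fix $R>0$, denote by $L_R$ the Lipschitz constant of $G$ on the closed ball $\overline{B}(\zeta^{in},R)\subset B$, and set $M_R:=\sup\{\|G(y)\|:y\in\overline{B}(\zeta^{in},R)\}$, which is finite because a locally Lipschitz map is bounded on bounded sets. Choose $T_0>0$ so small that $T_0 M_R\le R$ and $T_0 L_R<1$, and define the Picard map
\begin{equation*}
(\Phi\zeta)(t):=\zeta^{in}+\int_0^t G(\zeta(s))\,ds
\end{equation*}
on the complete metric space $X:=\{\zeta\in\mathcal{C}([0,T_0];B):\|\zeta(t)-\zeta^{in}\|\le R \text{ for all } t\in[0,T_0]\}$ equipped with the sup-norm. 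The first condition on $T_0$ ensures $\Phi$ maps $X$ into itself, the second makes $\Phi$ a strict contraction, and the Banach fixed point theorem then produces a unique solution on $[0,T_0]$.

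Uniqueness on any common interval of existence follows by a direct Gronwall argument: if $\zeta_1,\zeta_2$ are two solutions starting from the same datum, they remain in some closed ball on which $G$ is Lipschitz with constant $L$, and
\begin{equation*}
\|\zeta_1(t)-\zeta_2(t)\|\le L\int_0^t\|\zeta_1(s)-\zeta_2(s)\|\,ds
\end{equation*}
forces $\zeta_1=\zeta_2$. Consequently the union $T_1:=\sup\{T>0:\text{a solution exists on }[0,T)\}$ defines a unique maximal solution $\zeta\in\mathcal{C}^1([0,T_1);B)$ by pasting.

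The delicate step, and the main obstacle, is the blow-up dichotomy. To rule out the possibility that $T_1<\infty$ while $\|\zeta(t)\|$ stays bounded, I would argue by contradiction: assume there exists $C>0$ and a sequence $t_n\uparrow T_1$ with $\|\zeta(t_n)\|\le C$. Using the integral equation and the bound on $G$ over $\overline{B}(0,C+1)$ one shows that $\zeta$ is uniformly continuous on $[0,T_1)$, hence admits a limit $\zeta(T_1^-)\in B$ with norm bounded by some $C'$. Restarting the local existence result from the initial point $\zeta(T_1^-)$ with radius $R'=1$ yields a solution on $[T_1,T_1+\tau)$ for some $\tau>0$; concatenating with $\zeta$ and invoking uniqueness contradicts the maximality of $T_1$. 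Hence either $T_1=\infty$ or $\|\zeta(t)\|\to\infty$ as $t\uparrow T_1$, which is the stated alternative.
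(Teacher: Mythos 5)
The paper offers no proof of Theorem~\ref{Initialvaluethm}: it is quoted as a classical tool (the Picard--Lindel\"of theorem in a Banach space together with the blow-up alternative) and used as a black box in Proposition~\ref{proppooja}. Your argument is the standard one — integral reformulation, contraction mapping on a short interval, Gronwall for uniqueness, and a restart argument for the dichotomy — and its architecture is sound, so there is nothing to compare against in the paper itself.

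Two steps need tightening. First, in an infinite-dimensional Banach space the claim that \emph{a locally Lipschitz map is bounded on bounded sets} is false if ``locally Lipschitz'' means only that every point has a neighbourhood on which $G$ is Lipschitz: closed balls are not compact, and one can build locally Lipschitz maps on $\ell^2$ that are unbounded on the unit sphere. For local existence this is harmless — take $R$ small enough that $\overline{B}(\zeta^{in},R)$ lies inside a neighbourhood on which $G$ is $L_R$-Lipschitz, whence $M_R\le\|G(\zeta^{in})\|+L_R R$ automatically — but the blow-up alternative in the strong form $\lim_{t\to T_1}\|\zeta(t)\|=\infty$ does require $G$ to be Lipschitz (hence bounded) on bounded sets. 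That stronger property is exactly what the estimates \eqref{Inithm3}--\eqref{Inithm4} establish for $Q_{\varepsilon,n}$, so the theorem as invoked in the paper is fine, but your proof should say explicitly which notion of local Lipschitz continuity it assumes. Second, in the dichotomy argument, knowing $\|\zeta(t_n)\|\le C$ only along a sequence $t_n\uparrow T_1$ does not by itself yield uniform continuity of $\zeta$ on all of $[0,T_1)$, since a priori the solution could leave $\overline{B}(0,C+1)$ between consecutive $t_n$. The clean repair is the uniform local existence time: with $G$ bounded and Lipschitz on $\overline{B}(0,C+1)$, the solution restarted from the datum $\zeta(t_n)$ exists on an interval of length $\tau=\tau(C)>0$ independent of $n$; choosing $n$ with $T_1-t_n<\tau$ and invoking uniqueness extends $\zeta$ past $T_1$, contradicting maximality. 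With these adjustments the proof is complete.
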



\begin{proposition}\label{proppooja}
Let $\varepsilon \in (0, 1]$. Then, for each $n \ge 1$, there exists a unique non-negative solution $\zeta_{\varepsilon, n} \in \mathcal{C}^1([0,\infty);L^1((0, n); d\mu))$ to \eqref{RegluGener}--\eqref{RegluGenerin} which satisfies
\begin{align}\label{Barikprop}
\int_0^{n}  \mu \zeta_{\varepsilon, n} (\mu, t)d\mu  \le   \int_0^{n} \mu \zeta^{in}_{\varepsilon, n}(\mu) d\mu,
\end{align}
and
\begin{align}\label{Bariknorn}
\| \zeta_{\varepsilon, n} (t) \|_{L^1((0, n); d\mu)}  \le   \| \zeta^{in}_{\varepsilon, n} \|_{L^1((0, n); d\mu)},
\end{align}
for $t\ge 0$.
\end{proposition}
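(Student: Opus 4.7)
The plan is to apply Theorem \ref{Initialvaluethm} in the Banach space $B := L^1((0, n); d\mu)$. First I would verify that $Q_{\varepsilon, n}$ maps $B$ to $B$ and is locally Lipschitz. Since $\Lambda \in W^{1, \infty}_{\mathrm{loc}}([0, \infty)^2)$ by \eqref{bound coagulation kernel}, the truncated kernel satisfies $M_n := \|\Lambda_n\|_{L^\infty((0, n)^2)} < \infty$. The operator $Q_{\varepsilon, n}$ is bilinear in its argument, so writing $Q_{\varepsilon, n}(f) - Q_{\varepsilon, n}(g)$ as a sum of expressions bilinear in $(f - g, f)$ and $(f - g, g)$, repeated applications of Fubini give a bound of the form
\[
\|Q_{\varepsilon, n}(f) - Q_{\varepsilon, n}(g)\|_{L^1((0, n); d\mu)} \le \frac{C M_n}{\varepsilon} \bigl(\|f\|_{L^1} + \|g\|_{L^1}\bigr) \|f - g\|_{L^1}.
\]
Theorem \ref{Initialvaluethm} then yields a unique maximal solution $\zeta_{\varepsilon, n} \in \mathcal{C}^1([0, T_1); B)$ to \eqref{RegluGener}--\eqref{RegluGenerin}.

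Next I would establish non-negativity on $[0, T_1)$. Rewrite the equation as $\partial_t \zeta_{\varepsilon, n} = Q^1_{\varepsilon, n}(\zeta_{\varepsilon, n}) - L_{\varepsilon, n}(\mu, t)\, \zeta_{\varepsilon, n}$, where
\[
L_{\varepsilon, n}(\mu, t) := \Bigl(\frac{1}{\varepsilon} - 1\Bigr) \int_0^\mu \Lambda_n(\mu, \nu) \zeta_{\varepsilon, n}(\nu, t)\, d\nu + \int_0^n \Lambda_n(\mu, \nu) \zeta_{\varepsilon, n}(\nu, t)\, d\nu
\]
is locally bounded in $t$. Duhamel's formula then gives
\[
\zeta_{\varepsilon, n}(\mu, t) = e^{-\int_0^t L_{\varepsilon, n}(\mu, s)\, ds}\, \zeta^{in}_{\varepsilon, n}(\mu) + \int_0^t e^{-\int_s^t L_{\varepsilon, n}(\mu, r)\, dr}\, Q^1_{\varepsilon, n}(\zeta_{\varepsilon, n})(\mu, s)\, ds.
\]
A Picard-type iteration starting from $\zeta^{in}_{\varepsilon, n} \ge 0$, combined with the fact that $Q^1_{\varepsilon, n}$ preserves non-negativity, shows that every iterate is non-negative, and hence so is the limit $\zeta_{\varepsilon, n}$.

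I would then derive the a priori bounds from the weak formulation \eqref{Weaktruncsolunger}. Testing with $\omega \equiv 1$, the integrand $(\chi_{(0, n)}(\mu + \varepsilon \nu) - 1)/\varepsilon \le 0$ in the gain term and the loss term is manifestly non-positive, so $\|\zeta_{\varepsilon, n}(t)\|_{L^1}$ is non-increasing, giving \eqref{Bariknorn}. For the mass bound \eqref{Barikprop}, I would test with $\omega(\mu) = \mu$; splitting the gain term according to whether $\mu + \varepsilon \nu \le n$ and using the symmetry of $\Lambda_n$ to interchange variables in the loss integral, the three contributions combine to
\[
\frac{d}{dt} \int_0^n \mu\, \zeta_{\varepsilon, n}(\mu, t)\, d\mu = - \int_0^n \int_{\{\nu \le \mu,\, \mu + \varepsilon\nu > n\}} \Bigl(\nu + \frac{\mu}{\varepsilon}\Bigr) \Lambda_n(\mu, \nu) \zeta_{\varepsilon, n}(\mu, t) \zeta_{\varepsilon, n}(\nu, t)\, d\mu\, d\nu \le 0,
\]
which yields \eqref{Barikprop}. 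Since $\|\zeta_{\varepsilon, n}(t)\|_B$ stays uniformly bounded on $[0, T_1)$, Case II of Theorem \ref{Initialvaluethm} is excluded, so $T_1 = \infty$.

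The main obstacle is the careful bookkeeping required to extract the mass bound: the truncated gain term is genuinely asymmetric in $(\mu, \nu)$ because of the factor $\chi_{(0, n)}(\mu + \varepsilon \nu)$, so the usual symmetrization trick that cancels first moments in the untruncated generalized equation must be replaced by an explicit splitting of the domain, after which the overshoot contribution must be verified to carry the correct sign.
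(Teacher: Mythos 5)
Your proposal is correct and follows the same overall architecture as the paper: verify that $Q^1_{\varepsilon,n}$ and $Q^2_{\varepsilon,n}$ are locally Lipschitz on $L^1((0,n);d\mu)$ using the uniform bound on $\Lambda_n$, invoke Theorem~\ref{Initialvaluethm} for a unique maximal solution, derive \eqref{Bariknorn} and \eqref{Barikprop} from the weak formulation \eqref{Weaktruncsolunger} with $\omega\equiv 1$ and $\omega(\mu)=\mu\chi_{(0,n)}(\mu)$ respectively, and rule out finite-time blow-up. The one place where you take a genuinely different route is the non-negativity step: the paper replaces $Q^1_{\varepsilon,n}$ by its positive part $[Q^1_{\varepsilon,n}]_+$, solves the modified problem, and then applies the chain rule to $(-\zeta_{\varepsilon,n})_+$ together with Gronwall's lemma to conclude $\|(-\zeta_{\varepsilon,n})_+(t)\|_{L^1}=0$; you instead use a Duhamel representation with the loss term absorbed into an exponential integrating factor and a positivity-preserving Picard iteration. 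Both are standard and valid; the paper's argument has the advantage of working directly with the (modified) ODE and Gronwall without needing to re-justify convergence of an iteration scheme, whereas your route requires the additional (routine but unstated) verification that the iteration is contractive on short time intervals and that its limit coincides with the unique solution already produced by Theorem~\ref{Initialvaluethm}. Your explicit computation of the first-moment dissipation, isolating the overshoot region $\{\nu\le\mu,\ \mu+\varepsilon\nu>n\}$ with the non-negative weight $\nu+\mu/\varepsilon$, is correct and in fact supplies the detail that the paper compresses into the single word \enquote{analogously}.
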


\begin{proof}
From \eqref{coagulation kernel} and \eqref{trunc Coag}, we can see that
\begin{align}\label{Estimate_Coa}
\Lambda_{n}(\mu, \nu)\le 2 k n^{2+2\sigma},\ \ \ \ \forall (\mu, \nu)\in (1/n, n)^{2}.
\end{align}

In order to prove the existence of a unique non-negative solution to \eqref{RegluGener}--\eqref{RegluGenerin}, it is sufficient to apply Theorem \ref{Initialvaluethm}. For this, we first need to show that $Q^{i}_{\varepsilon, n},\ i=1,2$ are locally Lipschitz functions from $L^{1}((0, n); d\mu)$ into $L^{1}((0, n); d\mu)$. Let $\zeta^{1}$ and $\zeta^{2}$ be two solutions in $L^{1}((0, n); d\mu)$, then we have
\begin{align}\label{Inithm1}
\int_{0}^{n} & |Q^{1}_{\varepsilon, n}(\zeta^{1}_{\varepsilon, n})(\mu, t)-Q^{1}_{\varepsilon, n}(\zeta^{2}_{\varepsilon, n})(\mu, t)|d\mu \nonumber\\
 \le  & \frac{1}{\varepsilon}\int_{0}^{n}\int_{0}^{\mu/(1+\varepsilon)}|\Lambda_{n}(\mu-\varepsilon \nu, \nu) \{ \zeta^{1}_{\varepsilon, n}(\mu-\varepsilon \nu, t) \zeta^{1}_{\varepsilon, n}(\nu, t)- \zeta^{2}_{\varepsilon, n}(\mu-\varepsilon \nu, t) \zeta^{2}_{\varepsilon, n}(\nu, t) \}|d\nu d\mu.
\end{align}

On the one hand, by applying the Fubini theorem, the transformation $\mu-\varepsilon \nu=\mu'$ and $\nu=\nu'$ into \eqref{Inithm1} and then \eqref{Estimate_Coa}, we obtain
\begin{align}\label{Inithm3}
\int_{0}^{n} & |Q^{1}_{\varepsilon, n}(\zeta^{1}_{\varepsilon, n}) (\mu, t)-Q^{1}_{\varepsilon, n}(\zeta^{2}_{\varepsilon, n}) (\mu, t)|d\mu\nonumber\\
= &\frac{1}{\varepsilon}\int_{0}^{n}\int_{\nu'}^{n-\epsilon \nu'}|\Lambda_{n}(\mu^{'}, \nu')||\zeta^{1}_{\varepsilon,n}(\mu', t) \zeta^{1}_{\varepsilon, n}(\nu', t)- \zeta^{2}_{\varepsilon, n}(\mu{'}, t) \zeta^{2}_{\varepsilon, n}(\nu', t)|d\mu'd\nu'\nonumber\\
\le & \frac{2k n^{2+2\sigma}}{\varepsilon}\int_{0}^{n}\int_{0}^{n} \{|\zeta^{1}_{\varepsilon, n}(\mu, t)| |\zeta^{1}_{\varepsilon, n}(\nu, t)-\zeta^{2}_{\varepsilon, n}(\nu, t)|+ |\zeta^{2}_{\varepsilon, n}(\nu, t)| | \zeta^{1}_{\varepsilon, n}(\mu, t)-\zeta^{2}_{\varepsilon, n}(\mu, t)| \}d\mu d\nu \nonumber\\
= &\frac{2k n^{2+2\sigma}}{\varepsilon} \{ \|\zeta^{1}_{\varepsilon, n}\|_{L^{1}((0, n); d\mu)}
+\|\zeta^{2}_{\varepsilon, n}\|_{L^{1}((0, n); d\mu)} \} \|\zeta^{1}_{\varepsilon, n}-\zeta^{2}_{\varepsilon, n}\|_{L^{1}((0, n); d\mu)}.
\end{align}
On the other hand, again \eqref{Estimate_Coa} helps to show that $Q^{2}_{\varepsilon, n}$ is locally Lipschitz function, i.e.,
\begin{align}\label{Inithm4}
\int_{0}^{n}|Q^{2}_{\varepsilon, n}&(\zeta^{1}_{\varepsilon, n})(\mu, t)-Q^{2}_{\varepsilon, n}(\zeta^{2}_{\varepsilon, n})(\mu, t)|d\mu\nonumber\\
=& \bigg(\frac{1}{\varepsilon}-1\bigg)\int_{0}^{n} \int_{0}^{\mu}
|\Lambda_{n}(\mu, \nu)|
|\zeta^{1}_{\varepsilon, n}(\mu, t) \zeta^{1}_{\varepsilon, n}(\nu, t)-\zeta^{2}_{\varepsilon, n}(\mu, t)
\zeta^{2}_{\varepsilon, n}(\nu, t)|d\nu d\mu \nonumber\\
&+\int_{0}^{n}\int_{0}^{n}|\Lambda_{n}(\mu, \nu)|
|\zeta^{1}_{\varepsilon,n}(\mu, t) \zeta^{1}_{\varepsilon, n}(\nu, t)-\zeta^{2}_{\varepsilon, n}(\mu, t) \zeta^{2}_{\varepsilon, n}(\nu, t)|d\nu d\mu\nonumber\\
\le & 2kn^{2+2\sigma}\bigg(\frac{1}{\varepsilon}+2\bigg)
\{ \|\zeta^{1}_{\varepsilon, n}\|_{L^{1}((0, n); d\mu)}+\|\zeta^{2}_{\varepsilon, n}\|_{L^{1}((0, n); d\mu)} \} \|\zeta^{1}_{\varepsilon, n}
-\zeta^{2}_{\varepsilon, n}\|_{L^{1}((0, n); d\mu)}.
\end{align}

From \eqref{Inithm3} and \eqref{Inithm4}, it is clear that $Q^{i}_{\varepsilon, n}, i=1,2$ are  locally Lipschitz functions from $L^{1}((0, n); d\mu)$ to $L^{1}((0, n); d\mu)$. Then, by Theorem \ref{Initialvaluethm}, there exists a unique solution $\zeta_{\varepsilon, n} \in \mathcal{C}^{1}([0,T_{\ast});L^{1} ((0, n); d\mu))$ to \eqref{RegluGener}--\eqref{RegluGenerin}. Next, we want to show that $\zeta_{\varepsilon, n}$ is non-negative. Let us introduce the positive part of a real number $p$ as $[p]_{+}=\max\{p, 0\}$. Since $Q^{1}_{\varepsilon, n}$ is  locally Lipschitz, thus $[Q^{1}_{\varepsilon, n}]_{+}$ is also a  locally Lipschitz function. Hence, the initial value problem
\begin{align}\label{RegluGener1}
\partial_{t}(\zeta_{\varepsilon, n})=[Q^1_{\varepsilon, n}(\zeta_{\varepsilon, n})]_{+} - Q^2_{\varepsilon, n}(\zeta_{\varepsilon, n})
\end{align}
and
\begin{equation}\label{RegluGenerin1}
\zeta_{\varepsilon, n}(0)= \zeta^{in}_{\varepsilon, n},
\end{equation}
has a unique solution by Theorem \ref{Initialvaluethm}. Next, to prove the non-negativity of $\zeta_{\varepsilon,n}$,  we apply the chain rule
 \begin{align}\label{RegluGener2}
 \frac{\partial}{\partial t}(-\zeta_{\varepsilon,n})_{+}= -\mathrm{sign}_{+}(-\zeta_{\varepsilon,n})\frac{\partial}{\partial t}(\zeta_{\varepsilon,n}),
 \end{align}
 where $\mathrm{sign}_{+}(x)=1$ for $x\geq 0$ and  $\mathrm{sign}_{+}(x)=0$ for $x< 0$.
We infer from \eqref{RegluGener1},\eqref{Estimate_Coa} and \eqref{RegluGener2} that

\begin{align*}
 \frac{d}{dt} \|(-{\zeta_{\varepsilon, n}})_{+} (t) & \|_{L^1((0, n); d\mu)}\\
&=-\int_0^n \mathrm{sign}_+(-\zeta_{\varepsilon, n})(\mu, t) \left[ {Q}^1_{\varepsilon, n}(\zeta_{\varepsilon, n})(\mu, t) \right]_+ d\mu \\
& \qquad\qquad + \int_0^n \mathrm{sign}_+(-\zeta_{\varepsilon, n})(\mu, t) {Q}^2_{\varepsilon, n}(\zeta_{\varepsilon, n})(\mu, t) d\mu \\
& \qquad \le - \int_0^n \mathrm{sign}_+(-\zeta_{\varepsilon, n})(\mu, t)(- \zeta_{\varepsilon, n} (\mu, t))
\bigg[\bigg(\frac{1}{\varepsilon}-1\bigg)\int_{0}^{\mu} \Lambda_{n}(\mu, \nu) \zeta_{\varepsilon, n}(\nu, t)d\nu \nonumber\\
&  \qquad\qquad +\int_{0}^{n}\Lambda_{n}(\mu, \nu) \zeta_{\varepsilon, n}(\nu, t)d\nu \bigg] d\mu \\
& \qquad \le 2k n^{2+2\sigma}  \int_0^n (-\zeta_{\varepsilon, n})_+(t)
\bigg[\bigg(\frac{1}{\varepsilon}+1\bigg)\int_{0}^{\mu}  \zeta_{\varepsilon, n}(\nu, t)d\nu +\int_{0}^{n} \zeta_{\varepsilon, n}(\nu, t)d\nu \bigg] d\mu \\
& \qquad \le 2k n^{2+2\sigma} \bigg(\frac{1}{\varepsilon}+2\bigg) \|\zeta_{\varepsilon, n}(t)\|_{L^1((0, n); d\mu)} \| (-\zeta_{\varepsilon, n})_+(t)\|_{L^1((0, n); d\mu)}.
\end{align*}

Using an application of Gronwall's lemma, we end up with
\begin{align*}
\|(-{\zeta_{\varepsilon, n}}&(t))_{+}\|_{L^1((0, n); d\mu)} \nonumber\\
\le  & \|(-{\zeta^{in}_{\varepsilon, n})}_{+}\|_{L^1((0, n); d\mu)} \exp{\left( 2k n^{2+2\sigma} \bigg(\frac{1}{\varepsilon}+2\bigg) \int_0^t \|\zeta_{\varepsilon, n}(s)\|_{L^1((0, n); d\mu)} ds \right)}
\end{align*}
for $t\in [0, T_{\ast})$.
Next, the non-negativity of $\zeta^{in}_{\varepsilon, n}$ from \eqref{RegluGenerin} confirms that
\begin{equation*}
\|(-\zeta^{in}_{\varepsilon, n})_{+}\|_{L^1((0, n); d\mu)}=0.
\end{equation*}
Thus, we have $\zeta_{\varepsilon, n}(t) \ge 0$ a.e.\ in $\mathbb{R}_{>0}$.\\ \\
 Next, in order to prove \eqref{Bariknorn}, let us consider the following from \eqref{Weaktruncsolunger} by setting $\omega \equiv 1$
\begin{align*}
 \| \zeta_{\varepsilon, n} (t)\|_{L^1((0, n); d\mu)} = &\| \zeta^{in}_{\varepsilon, n} \|_{L^1((0, n); d\mu)}
- \int_0^t \int_0^n \int_0^{\nu} \Lambda_n(\mu, \nu) \zeta_{\varepsilon, n}(\mu, s) \zeta_{\varepsilon, n}(\nu, s) d\mu d\nu ds \nonumber\\
&-\frac{1}{\varepsilon} \int_0^t \int_0^{n} \int_{n-\varepsilon n}^{\nu} \Lambda_n(\mu, \nu) \zeta_{\varepsilon, n}(\mu, s). \zeta_{\varepsilon, n}(\nu, s) d\mu d\nu ds \nonumber\\
\end{align*}
Since $\zeta_{\varepsilon, n}(t) \ge 0$ a.e.\ in $\mathbb{R}_{>0}$ and $ \Lambda_n \ge 0$, hence, the above equation implies that
\begin{align}\label{Weaktruncsolunger3}
 \| \zeta_{\varepsilon, n} (t) \|_{L^1((0, n); d\mu)} \le  \| \zeta^{in}_{\varepsilon, n} \|_{L^1((0, n); d\mu)},\ \ \text{for}\ t \in [0, T_{\ast}),
\end{align}
which prevents the case-II (given in Theorem \ref{Initialvaluethm} ). Hence, $T_{\ast} =\infty$. Analogously, \ref{Barikprop} can be obtained by setting $\omega(\mu) = \mu \chi_{(0, n)}(\mu)$ into \eqref{Weaktruncsolunger}.
\end{proof}

Next, it can easily be shown that $\zeta_{\varepsilon, n}$ satisfies similar estimates in Lemmas \ref{LemmaUniformbound1}--\ref{LemmaUniformInt1} for each $T>0$.
Hence, we have
\begin{align}\label{Uniformboundtruncated}
 \int_{0}^{\infty} ( \mu^{-2\sigma}+\mu) \zeta_{\varepsilon, n}(\mu, t)  d\mu \le \Theta,
\end{align}
\begin{align}\label{LemmalargevalueA}
\sup_{t \in [0, T) }  \int_0^{\infty} \Psi_1(\mu) \zeta_{\epsilon, n} (\mu, t) d\mu < \Theta_2(T),
\end{align}
and
\begin{align}\label{UniformIntegra11}
\sup_{t \in  [0, T)}\int_{0}^{\infty} \Psi_2( \mu^{-\sigma} \zeta_{\varepsilon, n}(\mu, t)) d\mu \le \Theta_{1}(T),
\end{align}
uniformly with respect to $\varepsilon \in (0, 1]$ and $n \in \mathbb{N}$, and
where $\Psi_1$ and $\Psi_2$ are two convex functions whose first derivatives are concave and satisfy \eqref{convexp1}--\eqref{convexp2} and Lemma \ref{convexlambda}.  \\
From \eqref{UniformIntegra11} and the Dunford-Pettis theorem, we infer that there exists a weakly relatively compact subset $\mathcal{W}_T$ of $L^1( \mathbb{R}_{>0};  \mu^{-\sigma} d\mu )$ with respect to space variable $\mu$.

\subsection{Time Equicontinuity}
In this section, we turn to check the equicontinuity with respect to  time variable $t$ in the weak topology of $L^1( \mathbb{R}_{>0};  \mu^{-\sigma} d\mu )$. Let us first consider $\omega\in \mathcal{D}(\mathbb{R}_{>0})$ and $\lambda \in (1, n)$.  For $\eta \in (0, 1)$ and $0 \le t_1 < t_2 \le T$, set
$\omega_{3}(\mu) \equiv \omega(\mu) (\mu+\eta)^{-\sigma}$ into \eqref{Weaktruncsolunger}, where $\omega(\mu)=\omega(\mu) \chi_{(0, \lambda)}(\mu)$, we have
\begin{align}\label{Equicon1}
\bigg| \int_0^{\lambda} & \omega_{3}(\mu)  \{ \zeta_{\varepsilon, n} (\mu, t_2) - \zeta_{\varepsilon, n} (\mu, t_1) \} d\mu   \bigg| \nonumber\\
\le &  \frac{1}{\varepsilon}  \int_{t_1}^{t_2}   \int_0^n \int_{\nu}^n (\mu+\eta)^{-\sigma} | \omega(\mu+\varepsilon \nu)\chi_{(0, \lambda)}(\mu+\varepsilon \nu) -
\omega(\mu)\chi_{(0, \lambda)}(\mu) | \nonumber\\
 &~~~~~~~~~~ \times \Lambda_n(\mu, \nu) \zeta_{\varepsilon, n}(\mu, s) \zeta_{\varepsilon, n}(\nu, s) d\mu d\nu ds \nonumber\\
&+ \int_{t_1}^{t_2} \int_0^n \int_0^{\nu} (\mu+\eta)^{-\sigma} |\omega(\mu) |\chi_{(0, \lambda)}(\mu) \Lambda_n(\mu, \nu) \zeta_{\varepsilon, n}(\mu, s) \zeta_{\varepsilon, n}(\nu, s) d\mu d\nu ds  \nonumber\\
 = &   \int_{t_1}^{t_2}   \int_0^{\lambda} \int_{\lambda-\varepsilon \nu}^{\lambda} (\mu+\eta)^{-\sigma} \frac{ | \omega(\mu+\varepsilon \nu)- \omega(\mu) | } {\varepsilon} \Lambda_n(\mu, \nu) \zeta_{\varepsilon, n}(\mu, s) \zeta_{\varepsilon, n}(\nu, s) d\mu d\nu ds \nonumber\\
&+ \int_{t_1}^{t_2} \int_0^{n} \int_0^{\nu} (\mu+\eta)^{-\sigma} |\omega(\mu)| \Lambda_n(\mu, \nu) \zeta_{\varepsilon, n}(\mu, s) \zeta_{\varepsilon, n}(\nu, s) d\mu d\nu ds.
\end{align}

One can infer from \eqref{coagulation kernel}, Lemma \ref{LemmaUniformbound1} and \eqref{Equicon1} that
\begin{align*}
\bigg| \int_0^{\lambda}  (\mu+\eta)^{-\sigma}\omega(\mu) & \{ \zeta_{\varepsilon, n} (\mu, t_2) - \zeta_{\varepsilon, n} (\mu, t_1) \} d\mu   \bigg| \nonumber\\
\le & \|\omega\|_{W^{1, \infty}(\mathbb{R}_{>0})} \int_{t_1}^{t_2}   \int_0^{\lambda} \int_{\lambda-\varepsilon \nu}^{\lambda} (\mu+\eta)^{-\sigma} \nu \Lambda_n(\mu, \nu) \zeta_{\varepsilon, n}(\mu, s) \zeta_{\varepsilon, n}(\nu, s) d\mu d\nu ds \nonumber\\
&+ \|\omega \|_{L^{\infty}(\mathbb{R}_{>0})} \int_{t_1}^{t_2} \int_0^{n} \int_0^{\nu} \mu^{-\sigma}  \Lambda_n(\mu, \nu) \zeta_{\varepsilon, n}(\mu, s) \zeta_{\varepsilon, n}(\nu, s) d\mu d\nu ds \nonumber\\
  \le & 4 k(\|\omega\|_{W^{1, \infty}(\mathbb{R}_{>0})}  + \|\omega \|_{L^{\infty}(\mathbb{R}_{>0})} ) \Theta^2 (t_2 -t_1).
\end{align*}
As $\eta \to 0$, then by Fatou's lemma, we obtain
 \begin{align}\label{Equicon2}
\bigg| \int_0^{\lambda} \mu^{-\sigma} \omega(\mu) \{ \zeta_{\varepsilon, n} (\mu, t_2) - \zeta_{\varepsilon, n} (\mu, t_1) \} d\mu   \bigg|
  \le  k (\|\omega\|_{W^{1, \infty}(\mathbb{R}_{>0})}  + \|\omega \|_{L^{\infty}(\mathbb{R}_{>0})} ) \Theta^2 (t_2 -t_1).
\end{align}
Next, by using Lemmas \ref{LemmaUniformbound1}--\ref{Lemmalargevalue} and \eqref{Equicon2}, we estimate the following term as
 \begin{align*}
\bigg| \int_0^{\infty} &  \mu^{-\sigma} \omega(\mu) \{ \zeta_{\varepsilon, n} (\mu, t_2) - \zeta_{\varepsilon, n} (\mu, t_1) \} d\mu   \bigg|
\le  \bigg| \int_0^{\lambda} \mu^{-\sigma} \omega(\mu) \{ \zeta_{\varepsilon, n} (\mu, t_2) - \zeta_{\varepsilon, n} (\mu, t_1) \} d\mu   \bigg| \nonumber\\
&+\bigg| \int_{\lambda}^{\infty} \mu^{-\sigma} \omega(\mu) \{ \zeta_{\varepsilon, n} (\mu, t_2) - \zeta_{\varepsilon, n} (\mu, t_1) \} d\mu   \bigg| \nonumber\\
  \le & k (\|\omega\|_{W^{1, \infty}(\mathbb{R}_{>0})}  + \|\omega \|_{L^{\infty}(\mathbb{R}_{>0})} ) \Theta^2 (t_2 -t_1) + \|\omega \|_{L^{\infty}} \Theta_2(T) \frac{\lambda} {\Psi_1(\lambda)}.
\end{align*}

As $\lambda \to \infty$, then by \eqref{convexp1}, we have
 \begin{align*}
\bigg| \int_0^{\infty}   \mu^{-\sigma} \omega(\mu) \{ \zeta_{\varepsilon, n} (\mu, t_2) - \zeta_{\varepsilon, n} (\mu, t_1) \} d\mu   \bigg|
  \le  k (\|\omega\|_{W^{1, \infty}(\mathbb{R}_{>0})}  + \|\omega \|_{L^{\infty}}(\mathbb{R}_{>0}) ) \Theta^2 (t_2 -t_1).
\end{align*}
This proves the time equicontinuity in $L^1( \mathbb{R}_{>0};  \mu^{-\sigma} d\mu )$ when $\omega \in \mathcal{D}(\mathbb{R}_{>0})$.
\\

Next, we will check the time equi-continuity of family of solutions $ \{ \zeta_{\varepsilon, n}\}_{n >1} \subset L^1( \mathbb{R}_{>0};  \mu^{-\sigma} d\mu )$ when $\omega \in L^{\infty} (\mathbb{R}_{>0})$.\\

Let us assume $\Phi_{\varepsilon, n}(\mu, t)=\mu^{-\sigma}\zeta_{\varepsilon, n}(\mu,t)$ for every $0\le t_{1} < t_{2}\le T$. Then for $\omega \in L^{\infty}(\mathbb{R}_{>0})$, there exists a family of functions $\omega_{k} \in \mathcal{D}(\mathbb{R}_{>0})$ such that
\begin{align}\label{EquiContinuous1}
\omega_{k}(\mu)\rightarrow \omega(\mu)~~~\mbox{a.e.}~~\mbox{in}~~ \mathbb{R}_{>0}~\mbox{with}~\|\omega_{k}\|_{L^{\infty}(\mathbb{R}_{>0})}\le c\|\omega\|_{L^{\infty}(\mathbb{R}_{>0})},
\end{align}
where $c>0$ is a constant and supp$(\omega_{k})\subset[0, \lambda_0] \subset [0, n), \lambda_{0}>1$.\\

Let us simplify the following integral, by using the triangle inequality, as
 \begin{align}\label{EquiContinuous2}
\bigg|\int_{0}^{\infty} \{  \Phi_{\varepsilon, n}(\mu,t_{2})-\Phi_{\varepsilon, n}(\mu,t_{1}) \} \omega(\mu) d\mu\bigg|
\le &\bigg|\int_{0}^{\infty} \{ \Phi_{\varepsilon, n}(\mu,t_{2})-\Phi_{\varepsilon, n}(\mu,t_{1}) \}  \omega_{k}(\mu) d\mu\bigg|\nonumber\\ &+\bigg|\int_{0}^{\infty}  \{  \Phi_{\varepsilon, n}(\mu,t_{2})-\Phi_{\varepsilon, n}(\mu,t_{1}) \}
(\omega-\omega_{k})(\mu) d\mu\bigg|\nonumber\\
=:&\sum_{i=1}^{2} \mathcal{I}_{i}.
\end{align}
where $\mathcal{I}_{i}$, for $i=1,2$, are the first and second integral on the right-hand side of \eqref{EquiContinuous2}.
Next, one can infer from the Egorov theorem that
\begin{align}\label{EquiContinuous3}
\lim_{k \rightarrow \infty}\sup_{\mu \in (0, \lambda_0) \setminus \mathcal{E}_{\delta, \lambda_0}}|\omega_{k}(\mu)-\omega(\mu)|=0,~~~\mbox{for}~\mbox{every}~ \lambda_0>1~\mbox{and}~\delta\in(0, 1),
\end{align}
where $\mathcal{E}_{\delta, \lambda_0}$ is a measurable subset of $(0, \lambda_0)$ with Lebesgue measure $|\mathcal{E}_{\delta, \lambda_0}| \le \delta$.\\

From \eqref{EquiContinuous1}, we evaluate the estimate value of $\mathcal{I}_{1}$ as
\begin{align}\label{EquiContinuous4}
 \mathcal{I}_1 \le  k c (\|\omega\|_{W^{1, \infty}(\mathbb{R}_{>0})}  + \|\omega \|_{L^{\infty}(\mathbb{R}_{>0}) }  \Theta^2 |t_{2}-t_{1}|.
\end{align}

Next, we evaluate $\mathcal{I}_{2}$, by applying Lemma \ref{LemmaUniformbound1}, as
\begin{align}\label{EquiContinuous5}
\mathcal{I}_{2} = & \bigg|\int_{0}^{\infty} \{ \Phi_{\varepsilon, n}(\mu,t_{2})-\Phi_{\varepsilon, n}(\mu, t_{1}) \} (\omega-\omega_{k})(\mu) d\mu\bigg|\nonumber\\
\le & \int_{(0,\lambda_0) \setminus \mathcal{E}_{\delta, \lambda_0}}| \{ \Phi_{\varepsilon, n}(\mu,t_{2})-\Phi_{\varepsilon, n}(\mu,t_{1}) \} (\omega-\omega_{k})(\mu)| d\mu \nonumber\\
&+\int_{\mathcal{E}_{\delta, \lambda_0}}| \{ \Phi_{\varepsilon, n}(\mu,t_{2})-\Phi_{\varepsilon, n}(\mu,t_{1}) \}
(\omega-\omega_{k})(\mu)| d\mu\nonumber\\
&+\int_{\lambda_0}^{\infty}| \{ \Phi_{\varepsilon, n}(\mu,t_{2})-\Phi_{\varepsilon, n}(\mu,t_{1}) \} (\omega-\omega_{k})(\mu)| d\mu.\nonumber\\
\le & 2\Theta \sup_{\mu \in((0,\lambda_0) \setminus \mathcal{ E}_{\delta, \lambda_0})} |\omega_{k}(\mu)-\omega(\mu)| +\int_{\mathcal{E}_{\delta, \lambda_0}}| \{ \Phi_{\varepsilon, n}(\mu,t_{2})-\Phi_{\varepsilon, n}(\mu,t_{1}) \}
(\omega-\omega_{k})| d\mu\nonumber\\
&+\int_{\lambda_0}^{\infty}| \{ \Phi_{\varepsilon, n}(\mu,t_{2})-\Phi_{\varepsilon, n}(\mu,t_{1}) \}(\omega-\omega_{k})(\mu)| d\mu.
\end{align}
As $k \to \infty$ and using \eqref{EquiContinuous1} and \eqref{EquiContinuous3} into \eqref{EquiContinuous5}, we get
 \begin{align}\label{EquiContinuous6}
\mathcal{I}_{2}
\le &(c+1)\|\omega\|_{L^{\infty}(\mathbb{R}_{>0})} \bigg\{ \int_{\lambda_0}^{\infty} +\int_{\mathcal{E}_{\delta, \lambda_0}}  \bigg\}  \{ \mu^{-\sigma} \zeta_{\varepsilon, n}(\mu,t_{2})+\mu^{-\sigma} \zeta_{\varepsilon, n}(\mu,t_{1}) \} d\mu.
\end{align}
From \eqref{Uniformboundtruncated}, we have
\begin{align*}
\int_{\lambda_0}^{\infty} \mu^{-\sigma} \zeta_{\varepsilon, n}(\mu,t)d\mu \le \Theta~\mbox{and}~\int_{\mathcal{E}_{\delta, \lambda_0}}  \mu^{-\sigma} \zeta_{\varepsilon, n}(\mu,t)d\mu \le \Theta
\end{align*}
for $t \in (0, \infty)$. Using Lebesgue's dominated convergence theorem, we obtain
\begin{align}\label{EquiContinuous7}
\int_{\lambda_0}^{\infty} \mu^{-\sigma} \zeta_{\varepsilon, n}(\mu,t)d\mu \rightarrow 0, \ \text{as}\ \lambda_0 \rightarrow \infty.
\end{align}
Similarly, as $\lambda_0 \rightarrow \infty$ and $\delta \rightarrow 0$, we have
\begin{align}\label{EquiContinuous8}
\int_{\mathcal{E}_{\delta, \lambda_0}} \mu^{-\sigma} \zeta_{\varepsilon, n}(\mu, t)d\mu \rightarrow 0.
\end{align}

Now, taking the limit $\lambda_{0}\rightarrow \infty$ and $\delta\rightarrow0$ into \eqref{EquiContinuous6} and then using \eqref{EquiContinuous7} and \eqref{EquiContinuous8}, we have
\begin{align}\label{EquiContinuous 4}
\mathcal{I}_{2} \rightarrow 0.
\end{align}
 From  \eqref{EquiContinuous4} and \eqref{EquiContinuous 4} into \eqref{EquiContinuous2}, we obtain
\begin{align}\label{TimeEquicontin}
\bigg|\int_{0}^{\infty} \{ \Phi_{\varepsilon, n}(\mu, t_{2})-\Phi_{\varepsilon, n}(\mu, t_{1}) \} \omega(\mu) d\mu\bigg|\le C_{1}(\omega)|t_{2}-t_{1}|,
 \end{align}
  for $|t_{2}-t_{1}|<\delta_{1}$ for some sufficiently small $\delta_{1}$, this proves the equi-continuity result. One can easily see that it is also true for $t_{1}> t_{2}$. Thus, the family of functions $(\zeta_{\varepsilon, n})$ is a time equi-continuous in the topology $L^{1}( \mathbb{R}_{>0}; \mu^{-\sigma} d\mu)$.\\

   Then according to a refined version of Arzel\`{a}-Ascoli theorem \cite[Definition 1.3.1.]{Vrabie:1995} there exists a subsequence (not relabeled) $\zeta_{\varepsilon,n}$ and $\zeta_{\varepsilon} \in \mathcal{C}_w[(0, T); L^1(\mathbb{R}_{>0}, \mu^{-\sigma} d\mu ))$ such that
 \begin{align}\label{weakconver1}
  \zeta_{\varepsilon, n}(\mu, t) \rightarrow  \zeta_{\varepsilon} \ \ \text{in} \ \  \mathcal{C}_w([0, T); L^1(\mathbb{R}_{>0}, \mu^{-\sigma} d\mu )).
 \end{align}
 Next, from Lemma \ref{LemmaUniformbound1}, \eqref{LemmalargevalueA} and \eqref{convexp1}, one can easily improve the convergence in \eqref{weakconver1} to
  \begin{align}\label{weakconver2}
  \zeta_{\varepsilon, n}(\mu, t) \rightarrow  \zeta_{\varepsilon} \ \ \text{in} \ \  \mathcal{C}_w([0, T); L^1(\mathbb{R}_{>0}, (\mu^{-\sigma}+\mu) d\mu )).
 \end{align}
 As $\varepsilon \to 0$, we have
 \begin{align}\label{weakconver3}
  \lim_{\varepsilon \to 0}\zeta_{\varepsilon} \rightarrow  \zeta \ \ \text{in} \ \  \mathcal{C}_w([0, T); L^1(\mathbb{R}_{>0}, (\mu^{-\sigma}+\mu) d\mu )).
 \end{align}
 In order to complete the existence result of Theorem \ref{ThemExistence}, we need to check $\zeta$ is indeed a weak solution to \eqref{GenerSCEeps}--\eqref{GenerSCEepsin} which is shown in the next subsection.

\subsection{Weak Convergence of Integral Operators}
  Let $\omega \in\mathcal{D}([0, \infty))$ be a test function  with compact support contained in $(0, \lambda) \subset (0, n)$, for $1< \lambda <n$. From \eqref{WeakGen}, we have
\begin{align}\label{Passlim1}
\int_{0}^{\infty} \zeta_{\varepsilon, n }(\mu, t) \omega(\mu)d\mu
=&\int_{0}^{\infty} \zeta^{in}(\mu) \omega(\mu) d\mu +\mathcal{P}^{1}_{\varepsilon, n}(t)-\mathcal{P}^{2}_{\varepsilon, n}(t)
\end{align}
where
\begin{align*}
\mathcal{P}^{1}_{\varepsilon, n}(t)=&\int_{0}^{t}\int_{0}^{n}\int_{0}^{\nu}\bigg\{\frac{\omega(\nu+\varepsilon \tau)-\omega(\nu)}{\varepsilon}\bigg\}   \Lambda(\nu, \tau) \zeta_{\varepsilon, n}(\nu,s) \zeta_{\varepsilon, n} (\tau, s)  d\tau d\nu ds,\\
\mathcal{P}^{2}_{\varepsilon, n}(t)=& \int_{0}^{t}\int_{0}^{n} \int_{0}^{\nu}
\omega(\tau)  \Lambda(\nu, \tau) \zeta_{\varepsilon, n}(\nu,s)  \zeta_{\varepsilon, n}(\tau, s)  d\tau d\nu ds.
\end{align*}

Since $\zeta_{\varepsilon, n} \rightarrow \zeta_{\varepsilon}$ and $\zeta_{\varepsilon}\rightarrow \zeta $
 in $\mathcal{C}_w([0, T); L^{1}(\mathbb{R}_{>0}; (\mu+\mu^{-\sigma}) d\mu))$, then we have
\begin{align}\label{Passlim2}
\lim_{n \rightarrow \infty} \int_{0}^{n} \zeta_{\varepsilon, n}(\mu,t) \omega(\mu)d\mu=\int_{0}^{\infty} \zeta_{\varepsilon}(\mu,t) \omega(\mu)d\mu,
\end{align}
and as $\varepsilon\in(0,1]$
\begin{align}\label{Passlim3}
\lim_{\varepsilon \rightarrow 0}\int_{0}^{\infty} \zeta_{\varepsilon }(\mu,t) \omega(\mu) d\mu
=\int_{0}^{\infty} \zeta(\mu, t)\omega(\mu) d\mu.
\end{align}
In order to complete the proof it is sufficient to show that $\mathcal{P}^{1}_{\varepsilon, n}(t)\rightarrow\mathcal{P}^{1}(t)$ and $\mathcal{P}^{2}_{\varepsilon, n}(t)\rightarrow\mathcal{P}^{2}(t)$ as $n \rightarrow\infty$ and $\varepsilon \rightarrow 0$, where
\begin{align*}
\mathcal{P}^{1}(t)=\int_0^t \int_{0}^{\infty}\int_{0}^{\nu} \tau \omega' (\nu) \Lambda(\nu, \tau) \zeta(\nu, s) \zeta(\tau, s) d\tau d\nu ds,
\end{align*}
 and
\begin{align*}
\mathcal{P}^{2}(t)= \int_0^t \int_{0}^{\infty}\int_{0}^{\nu}  \omega(\tau) \Lambda(\nu, \tau) \zeta(\nu, s) \zeta(\tau, s) d\tau d\nu ds.
\end{align*}
Let us first split $ \mathcal{P}^{1}_{\varepsilon, n}$ into two sub-integrals as
\begin{align*}
&\mathcal{P}^{11}_{\varepsilon, n}(t)=\int_{0}^{t}\int_{0}^{\lambda}\int_{0}^{\nu}\bigg\{\frac{\omega(\nu+\varepsilon
\tau)-\omega(\nu)}{\varepsilon}\bigg\}   \Lambda(\nu, \tau) \zeta_{\varepsilon, n} (\tau, s) \zeta_{\varepsilon, n}(\nu,s) d\tau d\nu ds,\\
&\mathcal{P}^{12}_{\varepsilon, n}(t)=\int_{0}^{t}\int_{\lambda}^{n}\int_{0}^{\nu}\bigg\{\frac{\omega(\nu+\varepsilon \tau)-\omega(\nu)}{\varepsilon}\bigg\}   \Lambda(\nu, \tau) \zeta_{\varepsilon, n} (\tau, s) \zeta_{\varepsilon, n}(\nu,s) d\tau d\nu ds.
\end{align*}
Before passing  the limit in $\mathcal{P}^{11}_{\varepsilon,\lambda}$,  we  claim that for all $\nu\in (0,\lambda)$,
 \begin{align}\label{convergence1}
\lim_{n \rightarrow \infty} \int_{0}^{\nu}  \nu^{\sigma} \tau^{\sigma} & \bigg\{\frac{\omega(\nu+\varepsilon \tau)-\omega(\nu)}{\varepsilon}\bigg\}   \Lambda(\nu, \tau) \zeta_{\varepsilon, n} (\tau, s)  d\tau \nonumber\\
= & \int_{0}^{\nu}  \nu^{\sigma} \tau^{\sigma}  \bigg\{\frac{\omega(\nu+\varepsilon \tau)-\omega(\nu)}{\varepsilon}\bigg\}   \Lambda(\nu, \tau) \zeta_{\varepsilon}(\tau, s)  d\tau.
\end{align}

One can infer from \eqref{coagulation kernel} and $\omega \in W^{1, \infty}$ that $  \nu^{\sigma} \tau^{\sigma}  \bigg\{\frac{\omega(\nu+\varepsilon \tau)-\omega(\nu)}{\varepsilon}\bigg\}   \Lambda(\nu, \tau)\in L^{\infty}((0, \lambda)^{2})$.

Since $\zeta_{\varepsilon, n} \to \zeta_{\varepsilon}$ in $\mathcal{C}([0,T]_w; L^1(\mathbb{R}_{>0};  \mu^{-\sigma} d\mu))$, then by \cite[Lemma 4.3]{Laurencot:2002L}, we have
 \begin{align}\label{convergence3}
 \lim_{n \rightarrow \infty} \int_{0}^{\nu}  \nu^{\sigma}  & \bigg\{\frac{\omega(\nu+\varepsilon \tau)-\omega(\nu)}{\varepsilon}\bigg\}   \Lambda(\nu, \tau) \zeta_{\varepsilon, n} (\tau, s)  d\tau \nonumber\\
= & \int_{0}^{\nu}  \nu^{\sigma}   \bigg\{\frac{\omega(\nu+\varepsilon \tau)-\omega(\nu)}{\varepsilon}\bigg\}   \Lambda(\nu, \tau) \zeta_{\varepsilon} (\tau, s)  d\tau.
\end{align}
Next, from \eqref{coagulation kernel} and \eqref{Uniformboundtruncated}, the following integral can easily be shown finite i.e.\
\begin{align}\label{convergence2}
\int_{0}^{\nu}  \nu^{\sigma} \tau^{\sigma}  \bigg\{\frac{\omega(\nu+\varepsilon \tau)-\omega(\nu)}{\varepsilon}\bigg\}   \Lambda(\nu, \tau) \zeta_{\varepsilon, n} (\tau, s)  d\tau \le K(\lambda)  \|\omega\|_{ W^{1, \infty}(\mathbb{R}_{>0}) } \Theta <\infty,
\end{align}
where $K(\lambda)$ is a constant depending on $\lambda$. Applying once more \cite[Lemma 4.3]{Laurencot:2002L}, we obtain
 \begin{align}\label{convergence4}
 \lim_{n \rightarrow \infty} \int_{0}^{t} & \int_{0}^{\lambda}  \int_{0}^{\nu}     \bigg\{\frac{\omega(\nu+\varepsilon \tau)-\omega(\nu)}{\varepsilon}\bigg\}   \Lambda(\nu, \tau) \zeta_{\varepsilon, n} (\tau, s) \zeta_{\varepsilon, n} (\nu, s)  d\tau d\nu ds \nonumber\\
= & \int_{0}^{t} \int_{0}^{\lambda} \int_{0}^{\nu}   \bigg\{\frac{\omega(\nu+\varepsilon \tau)-\omega(\nu)}{\varepsilon}\bigg\}   \Lambda(\nu, \tau) \zeta_{\varepsilon} (\tau, s) \zeta_{\varepsilon} (\tau, s) d\tau d\nu ds.
\end{align}
As $\varepsilon \to 0$ to \eqref{convergence4}, we have
 \begin{align}\label{convergence5}
\lim_{\varepsilon \to 0}\lim_{n \to \infty} \mathcal{P}^{11}_{\varepsilon, n}(t)  =& \lim_{\varepsilon \to 0} \int_{0}^{t}  \int_{0}^{\lambda}  \int_{0}^{\nu}     \bigg\{\frac{\omega(\nu+\varepsilon \tau)-\omega(\nu)}{\varepsilon}\bigg\}   \Lambda(\nu, \tau) \zeta_{\varepsilon} (\tau, s) \zeta_{\varepsilon} (\nu, s)  d\tau d\nu ds \nonumber\\
= & \int_{0}^{t} \int_{0}^{\lambda} \int_{0}^{\nu} \tau  \omega'(\nu)   \Lambda(\nu, \tau) \zeta(\tau, s) \zeta(\tau, s) d\tau d\nu ds.
\end{align}

Finally, by using \eqref{coagulation kernel}, \eqref{Uniformboundtruncated}, \eqref{LemmalargevalueA}, the integrability properties of $\zeta$ from Definition~\ref{definition} and Lebesgue's dominated convergence theorem, we estimate that
\begin{align*}
\lim_{\lambda \to \infty} \mathcal{P}^{12}_{\varepsilon, n}(t)  = & \lim_{\lambda \to \infty} \int_{0}^{t}\int_{\lambda}^{n}\int_{0}^{\nu}\bigg\{\frac{\omega(\nu+\varepsilon \tau)-\omega(\nu)}{\varepsilon}\bigg\}   \Lambda(\nu, \tau) \zeta_{\varepsilon, n} (\tau, s) \zeta_{\varepsilon, n}(\nu,s) d\tau d\nu ds\\
  \le & k   \|  \omega \|_{W^{1, \infty} (\mathbb{R}_{>0})} \lim_{\lambda \to \infty} \int_{0}^{t}\int_{\lambda}^{n}\int_{0}^{1}   \nu \tau^{-\sigma} \zeta_{\varepsilon, n} (\tau, s) \zeta_{\varepsilon, n}(\nu,s) d\tau d\nu ds \\
   & + k   \|  \omega \|_{W^{1, \infty} (\mathbb{R}_{>0})} \lim_{\lambda \to \infty} \int_{0}^{t}\int_{\lambda}^{n}\int_{1}^{\nu}   (\nu+\tau) \zeta_{\varepsilon, n} (\tau, s) \zeta_{\varepsilon, n}(\nu,s) d\tau d\nu ds \\
   \le & 3k   \|  \omega \|_{W^{1, \infty} (\mathbb{R}_{>0})} \lim_{\lambda \to \infty} \frac{\lambda} {\Psi(\lambda)} \Theta \int_{0}^{t}\int_{\lambda}^{n}  \Psi(\nu)  \zeta_{\varepsilon, n}(\nu,s) d\nu ds \\
    \le & 3k    \|  \omega \|_{W^{1, \infty}(\mathbb{R}_{>0})}  \lim_{\lambda \to \infty} \frac{\lambda} {\Psi(\lambda)} \Theta \Theta_2(T)T = 0.
\end{align*}

Similarly, we can prove that for $\varepsilon \to 0$,
 \begin{align}\label{convergence6}
\lim_{\lambda \to \infty}  \mathcal{P}^{12}_{\varepsilon}(t)
=  \lim_{\lambda \to \infty} \int_{0}^{t}\int_{\lambda}^{\infty}\int_{0}^{\nu}  \tau \omega'(\nu)   \Lambda(\nu, \tau) \zeta_{\varepsilon} (\tau, s) \zeta_{\varepsilon}(\nu,s) d\tau d\nu ds=0.
\end{align}

Combing \eqref{convergence5} and \eqref{convergence6}, we obtain
\begin{align}
\lim_{\varepsilon \to 0 } \lim_{n \to \infty} \mathcal{P}^{1}_{\varepsilon, n}(t) = \mathcal{P}^{1}(t).
\end{align}
Similarly, it can be easily seen that
\begin{align}
\lim_{\varepsilon \to 0 } \lim_{n \to \infty} \mathcal{P}^{2}_{\varepsilon, n}(t) = \mathcal{P}^{2}(t).
\end{align}
This completes the weak convergence of integral operators. Hence, $\zeta$ is a weak solution to \eqref{OHS1}--\eqref{SCEin}.

\section{Weak solutions are mass-conserving}

For the purpose of completing the proof of Theorem~\ref{Theorem1}, it is sufficient to show that all solutions are mass-conserving. For this, we need the following lemma which will give the sketch of the proof of the equation \ref{ThemMasscons}.
\begin{lemma}\label{lemma1}
 Suppose $\Lambda$ satisfies \eqref{coagulation kernel}--\eqref{bound coagulation kernel} and $\zeta^{in} \in \mathcal{Y}^+$. Let $\zeta$ be a weak solution to \eqref{OHS1}--\eqref{SCEin} on $[0, T)$. Then, we have
\begin{align}\label{lemma11}
\int_0^{\lambda} \mu \zeta(\mu, t)  d\mu -\int_0^{\lambda} \mu \zeta^{in}(\mu )  d\mu =
-\int_{0}^{t} \int_{\lambda}^{\infty}  \int_0^{\lambda}  \nu \Lambda(\mu, \nu) \zeta(\mu, s) \zeta(\nu, s) d\nu d\mu ds,
\end{align}
for $\lambda \in \mathbb{R}_{>0}$ and $t\in (0,T)$.
\end{lemma}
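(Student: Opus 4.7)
My plan is to substitute the Lipschitz test function $\omega(\mu) = \mu \wedge \lambda$ into the weak formulation \eqref{weaksolohs}. This function lies in $W^{1,\infty}(\mathbb{R}_{>0})$ with weak derivative $\omega'(\mu) = \chi_{(0,\lambda)}(\mu)$; since the strict compact support of $\omega'$ inside $\mathbb{R}_{>0}$ is required by Definition~\ref{definition}, I would first use the regularisation $\omega_{\delta}(\mu) = (\mu - \delta)_+ \wedge (\lambda - \delta)$ (whose derivative is supported on the compact set $[\delta,\lambda]$) and then send $\delta\downarrow 0$, the passage to the limit being supplied by the bound $\int\mu^{-2\sigma}\zeta\,d\mu\le\Theta$ of Lemma~\ref{LemmaUniformbound1} and dominated convergence.

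The core computation is then the explicit evaluation of $\omega_1(\nu,\tau) = \tau\omega'(\nu) - \omega(\tau)$ on the triangle $\{0<\tau<\nu\}$, which splits into three regimes: $\omega_1 = 0$ on $\{\nu<\lambda\}$ (since then $\tau<\lambda$ and $\omega_1 = \tau - \tau$); $\omega_1 = -\tau$ on $\{\nu\ge\lambda,\ \tau<\lambda\}$; and $\omega_1 = -\lambda$ on $\{\nu\ge\lambda,\ \tau\in[\lambda,\nu]\}$. Writing the left-hand side of \eqref{weaksolohs} as $\int_0^\lambda \mu[\zeta-\zeta^{in}]\,d\mu + \lambda\int_\lambda^\infty[\zeta-\zeta^{in}]\,d\mu$, and renaming $\nu\to\mu,\ \tau\to\nu$ on the right (using the symmetry of $\Lambda$), one arrives at the augmented identity
\begin{align*}
\int_0^\lambda \mu[\zeta(\mu,t)-\zeta^{in}]\,d\mu + \lambda\int_\lambda^\infty[\zeta-\zeta^{in}]\,d\mu
&= -\int_0^t\!\!\int_\lambda^\infty\!\!\int_0^\lambda \nu\,\Lambda(\mu,\nu)\,\zeta(\mu,s)\zeta(\nu,s)\,d\nu\,d\mu\,ds \\
&\quad - \lambda\int_0^t\!\!\int_\lambda^\infty\!\!\int_\lambda^\mu \Lambda(\mu,\nu)\,\zeta(\mu,s)\zeta(\nu,s)\,d\nu\,d\mu\,ds.
\end{align*}
The first double integral on the right is already exactly the right-hand side of \eqref{lemma11}, so the lemma will follow once I establish the ``tail identity''
\[
\int_\lambda^\infty[\zeta(\mu,t)-\zeta^{in}(\mu)]\,d\mu = -\int_0^t\!\!\int_\lambda^\infty\!\!\int_\lambda^\mu \Lambda(\mu,\nu)\,\zeta(\mu,s)\zeta(\nu,s)\,d\nu\,d\mu\,ds,
\]
since multiplying it by $\lambda$ simultaneously cancels the two extraneous $\lambda\cdot(\cdots)$ terms on either side.

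To obtain the tail identity, I would feed the test function $\omega_M(\mu) = (\mu-\lambda)_+\wedge M$, which lies in $W^{1,\infty}(\mathbb{R}_{>0})$ with $\omega_M' = \chi_{(\lambda,\lambda+M)}$ compactly supported, into \eqref{weaksolohs}, perform the analogous three-regime decomposition of $\omega_1$, and send $M\to\infty$. The superlinear moment control of Lemma~\ref{Lemmalargevalue} delivers
\[
M\int_{\lambda+M}^\infty \zeta(\mu,t)\,d\mu \le \int_{\lambda+M}^\infty \mu\,\zeta(\mu,t)\,d\mu \le \Theta_2(T)\sup_{\mu\ge\lambda+M}\frac{\mu}{\Psi_1(\mu)} \longrightarrow 0,
\]
which together with the kernel bound \eqref{coagulation kernel} handles the far-field contributions involving $\omega_M(\tau)=M$ and $\omega_M(\tau)=\tau-\lambda$. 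The principal obstacle is precisely this limit argument: since weak solutions lack pointwise traces at $\mu=\lambda$, care is required to ensure that the integrals over the shrinking strip near $\mu=\lambda$ do not leave a boundary residue. The Dunford--Pettis equi-integrability provided by $\Psi_2$ in Lemma~\ref{LemmaUniformInt1}, combined with the refined de la Vall\'ee--Poussin moment \eqref{convexp2}, supplies the needed uniform control, so that sending $M\to\infty$ leaves only the clean tail identity, and the lemma follows.
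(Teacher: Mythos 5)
The paper's proof is a one\--line substitution: it takes the test function $\omega(\mu)=\mu\,\chi_{(0,\lambda)}(\mu)$, which \emph{vanishes identically on} $[\lambda,\infty)$. Then the left\--hand side of the weak formulation is exactly $\int_0^\lambda\mu[\zeta-\zeta^{in}]\,d\mu$, the only regime where $\omega_1$ (equivalently $\omega_\varepsilon$) is nonzero is $\{\nu\ge\lambda,\ \tau<\lambda\}$, where it equals $-\tau$, and \eqref{lemma11} drops out immediately. Your choice $\omega(\mu)=\mu\wedge\lambda$ does not vanish above $\lambda$, so you pick up the two extra $\lambda$\--weighted terms in your ``augmented identity'' (which is itself correctly computed), and the whole proof is then deferred to the ``tail identity''
\[
\int_\lambda^\infty[\zeta(\mu,t)-\zeta^{in}(\mu)]\,d\mu = -\int_0^t\!\!\int_\lambda^\infty\!\!\int_\lambda^\mu \Lambda(\mu,\nu)\,\zeta(\mu,s)\zeta(\nu,s)\,d\nu\,d\mu\,ds .
\]
This is where the proposal breaks down. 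First, note that this identity is precisely (augmented identity $-$ \eqref{lemma11})$/\lambda$, so proving it is logically equivalent to proving the lemma; you have made no reduction. Second, your proposed derivation of it does not work: with $\omega_M(\mu)=(\mu-\lambda)_+\wedge M$ one has $\omega_1(\nu,\tau)=\tau\chi_{(\lambda,\lambda+M)}(\nu)-((\tau-\lambda)_+\wedge M)$, which equals $+\tau$ (not $0$) on $\{\lambda<\nu<\lambda+M,\ \tau<\lambda\}$ and $+\lambda$ on $\{\lambda<\tau<\nu<\lambda+M\}$; letting $M\to\infty$ therefore produces an identity for $\int_\lambda^\infty(\mu-\lambda)\,\zeta(\mu,t)\,d\mu$, not for $\int_\lambda^\infty\zeta(\mu,t)\,d\mu$. (Added to your augmented identity it yields mass conservation directly, since $(\mu\wedge\lambda)+(\mu-\lambda)_+=\mu$, but it does not yield \eqref{lemma11}.)

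Third, and most seriously, the tail identity is not true as written. To reach $\int_\lambda^\infty\zeta\,d\mu$ from admissible test functions you must let $\omega'$ concentrate at $\mu=\lambda$, and the term $\tau\,\omega'(\nu)$ in $\omega_1$ then produces the trace term $\int_0^t\zeta(\lambda,s)\int_0^\lambda\tau\,\Lambda(\lambda,\tau)\,\zeta(\tau,s)\,d\tau\,ds$, which is the transport flux of the OHS equation across $\mu=\lambda$: particles \emph{enter} $(\lambda,\infty)$ by growing past $\lambda$ through accretion of smaller ones, so $\int_\lambda^\infty\zeta\,d\mu$ is not governed by the coagulation loss term alone. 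Your identity omits this nonnegative influx, and the equi\--integrability and moment bounds you invoke cannot make it disappear. The fix is simply to abandon $\mu\wedge\lambda$ and argue as the paper does, with a test function that is identically zero on $[\lambda,\infty)$, so that neither the tail of the solution nor the flux at $\lambda$ through the $\omega(\tau)$ part ever enters the computation.
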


\begin{proof}
Set  $\omega (\mu)=\mu \chi_{(0, \lambda)}(\mu)$ for $\mu \in \mathbb{R}_{>0}$ and substitute this into \eqref{Identity4} to have
\begin{equation*}
 \omega_{\varepsilon}(\mu, \nu)=\begin{cases}
0,\ & \text{if}\ (\mu, \nu ) \in (0, \lambda) \times (0, \mu),\ \\
- \nu, \  &  \text{if}\ (\mu,\nu) \in [\lambda, \infty) \times (0, \lambda),\ \\
0,\ &  \text{if}\ (\mu, \nu)\in [\lambda, \infty) \times [\lambda, \mu).
\end{cases}
\end{equation*}
Inserting the above values of $\omega_{\varepsilon}$ into \eqref{WeakGenSCEli}, we find
\begin{align*}
\int_0^{\lambda}[ \zeta({\mu}, t) - \zeta^{in}({\mu})] {\mu} d{\mu}
=&{-\int_0^t  \int_{\lambda}^\infty \int_0^{\lambda}  \nu \Lambda({\mu},{\nu}) \zeta({\mu}, s) \zeta({\nu}, s)d{\nu}d{\mu}ds},
\end{align*}
which settles the proof of Lemma~\ref{lemma1}.
\end{proof}

In order to complete the proof of equation \eqref{ThemMasscons}, it is enough to show that the right-hand side of (\ref{lemma11}) goes
to zero as $\lambda \to \infty$.

\begin{lemma}\label{lemma2}
Assume that $\Lambda$ satisfies \eqref{coagulation kernel} and $\zeta^{in} \in \mathcal{Y}^+$. Let the weak solution $\zeta$ to
\eqref{OHS1}--\eqref{SCEin}. Then the following holds
\begin{align*}
 \lim_{\lambda \to \infty}\int_0^t \int_{\lambda}^{\infty} \int_0^{\lambda}  \nu \Lambda({\mu}, {\nu})  \zeta({\mu}, s)  \zeta({\nu}, s)d{\nu}d{\mu} ds=0,
\end{align*}
for each $t\in (0,T)$.
\end{lemma}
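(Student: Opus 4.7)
The plan is to split the inner $\nu$-integral at $\nu=1$, apply the appropriate piece of the growth bound \eqref{coagulation kernel} on each region, and then reduce both pieces to a tail estimate of the first moment of $\zeta$, which we control via the superlinear moment $\Psi_1$ from Lemma~\ref{Lemmalargevalue}. Throughout we take $\lambda\ge 1$, which is harmless since we pass $\lambda\to\infty$.

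On the region $\mu\in[\lambda,\infty)$, $\nu\in(0,1)$ the second line of \eqref{coagulation kernel} gives $\Lambda(\mu,\nu)\le k\mu\nu^{-\sigma}$, so the inner integral is bounded by
\[
k\mu\,\zeta(\mu,s)\int_0^1 \nu^{1-\sigma}\zeta(\nu,s)\,d\nu.
\]
Since $\sigma\ge 0$, for $\nu\in(0,1)$ we have $\nu^{1-\sigma}=\nu^{1+\sigma}\nu^{-2\sigma}\le\nu^{-2\sigma}$, and Lemma~\ref{LemmaUniformbound1} (or rather its analogue already passed through the limit \eqref{weakconver3}, yielding \eqref{UniformInt2} for $\zeta$) gives $\int_0^1\nu^{-2\sigma}\zeta(\nu,s)\,d\nu\le\Theta$. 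Hence this piece contributes at most $k\Theta\int_0^t\!\int_\lambda^\infty \mu\zeta(\mu,s)\,d\mu\,ds$.

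On the region $\mu\in[\lambda,\infty)$, $\nu\in[1,\lambda)$, the third line of \eqref{coagulation kernel} yields $\Lambda(\mu,\nu)\le k(\mu+\nu)\le 2k\mu$ (using $\nu\le\lambda\le\mu$). The inner integral is then bounded by
\[
2k\mu\,\zeta(\mu,s)\int_1^\lambda \nu\,\zeta(\nu,s)\,d\nu \le 2k\Theta\,\mu\,\zeta(\mu,s),
\]
again by \eqref{UniformInt2}. So this piece also reduces to a multiple of $\int_0^t\!\int_\lambda^\infty \mu\zeta(\mu,s)\,d\mu\,ds$.

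It remains to show this tail integral vanishes as $\lambda\to\infty$. Since $\Psi_1$ is convex with $\Psi_1(0)=0$, the map $\mu\mapsto \mu/\Psi_1(\mu)$ is non-increasing, so
\[
\int_\lambda^\infty \mu\zeta(\mu,s)\,d\mu = \int_\lambda^\infty \frac{\mu}{\Psi_1(\mu)}\,\Psi_1(\mu)\zeta(\mu,s)\,d\mu \le \frac{\lambda}{\Psi_1(\lambda)}\int_0^\infty \Psi_1(\mu)\zeta(\mu,s)\,d\mu \le \frac{\lambda}{\Psi_1(\lambda)}\,\Theta_2(T),
\]
by the analogue of Lemma~\ref{Lemmalargevalue} for the limit $\zeta$ (which is inherited from \eqref{LemmalargevalueA} and the weak convergence in \eqref{weakconver3} via Fatou). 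Integrating in $s\in[0,t]$ and invoking \eqref{convexp1}, which says $\lambda/\Psi_1(\lambda)\to 0$, gives the conclusion. The main (mild) obstacle is ensuring the bounds from Lemmas~\ref{LemmaUniformbound1}--\ref{Lemmalargevalue} carry over to the limit $\zeta$ itself; this is standard once one notes that these moment bounds are preserved under weak convergence in $\mathcal{C}_w([0,T); L^1(\mathbb{R}_{>0};(\mu^{-\sigma}+\mu)d\mu))$.
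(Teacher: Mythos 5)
Your decomposition is exactly the paper's: split the inner integral at $\nu=1$, use the second line of \eqref{coagulation kernel} on $(0,1)$ and the third on $[1,\lambda)$, and reduce everything to the tail $\int_\lambda^\infty \mu\,\zeta(\mu,s)\,d\mu$. The estimates themselves are fine (in particular $\nu^{1-\sigma}\le\nu^{-2\sigma}$ on $(0,1)$ and $\mu+\nu\le 2\mu$ on the second region are both correct). Where you diverge from the paper is in \emph{which} bounds you invoke: you control the small-$\nu$ and moderate-$\nu$ integrals by the a priori constant $\Theta$ from Lemma~\ref{LemmaUniformbound1}, and you kill the tail via the superlinear moment $\Psi_1$ from Lemma~\ref{Lemmalargevalue}, arguing that these bounds pass to the limit $\zeta$ along \eqref{weakconver3}. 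The paper instead uses only the integrability built into Definition~\ref{definition}: it bounds $\Sigma_1$ and $\Sigma_2$ by $k\|\zeta(s)\|_{\mathcal{Y}}\int_\lambda^\infty\mu\,\zeta(\mu,s)\,d\mu$ and then observes that, since $\mu\mapsto\mu\,\zeta(\mu,s)$ is integrable for a.e.\ $s$, the tail tends to zero pointwise in $s$ and Lebesgue's dominated convergence theorem in $s$ finishes the proof — no superlinear moment is needed.

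This difference is not merely cosmetic. Lemma~\ref{lemma2} (and the whole of Section 4) is stated for an \emph{arbitrary} weak solution in the sense of Definition~\ref{definition}, not just for the solution constructed as a limit of $(\zeta_{\varepsilon,n})$; that is the content of the claim that \enquote{all weak solutions are mass-conserving.} Your argument, as written, only applies to the constructed solution, because an arbitrary weak solution comes with no approximating sequence through which to inherit \eqref{Uniformboundtruncated} or \eqref{LemmalargevalueA}, and nothing in Definition~\ref{definition} guarantees $\sup_t\int_0^\infty\Psi_1(\mu)\zeta(\mu,t)\,d\mu<\infty$ or even $\int_0^\infty\mu^{-2\sigma}\zeta(\mu,t)\,d\mu<\infty$ (the definition only controls the $\mu^{-\sigma}$ moment). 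The repair is easy and shortens your proof: on $(0,1)$ use $\nu^{1-\sigma}\le\nu^{-\sigma}$ so that the $L^\infty(0,T;L^1(\mu^{-\sigma}d\mu))$ bound from the definition suffices, and replace the $\lambda/\Psi_1(\lambda)$ tail estimate by the elementary fact that the tail of a convergent integral vanishes, combined with dominated convergence in $s$. With those substitutions your proof coincides with the paper's and covers the full scope of the lemma.
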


\begin{proof}
Assume $\lambda >1$, $t\in (0,T)$, and $s\in (0, t)$. In order to prove Lemma~\ref{lemma2}, we first split the following integral into two sub-integrals as
\begin{equation*}
\int_{\lambda}^{\infty} \int_0^{\lambda}  \nu \Lambda({\mu},{\nu}) \zeta({\mu}, s) \zeta({\nu}, s) d{\nu} d{\mu} = \Sigma_1(\lambda, s) + \Sigma_2(\lambda, s),
\end{equation*}
with
\begin{align*}
\Sigma_1(\lambda, s) & := \int_{\lambda}^{\infty} \int_0^1  \nu \Lambda({\mu},{\nu}) \zeta({\mu}, s) \zeta({\nu}, s) d{\nu} d{\mu}, \\
\Sigma_2(\lambda, s) & := \int_{\lambda}^{\infty} \int_1^{\lambda}  \nu \Lambda({\mu},{\nu}) \zeta({\mu}, s) \zeta({\nu}, s) d{\nu} d{\mu}.
\end{align*}
On the one hand, we infer from \eqref{coagulation kernel} and Young's inequality that
\begin{align*}
\Sigma_1(\lambda, s)  & \le k\int_{\lambda}^{\infty} \int_0^1    \nu^{1-\sigma} \mu \zeta({\mu}, s) \zeta({\nu}, s) d{\nu} d{\mu} \\
& \le k \left( \int_0^\infty \nu^{1/2-\sigma}  \zeta(\nu, s) d\nu \right) \left( \int_{\lambda}^\infty \mu \zeta(\mu, s) d\mu \right)  \le k \|\zeta(s)\|_{ \mathcal{Y}} \int_{\lambda}^\infty \mu \zeta(\mu,s) d\mu,
\end{align*}
and by using the integrability properties of $\zeta$ from Definition~\ref{definition} and Lebesgue's dominated convergence theorem, we obtain
\begin{equation}\label{Barik1}
\lim_{\lambda \to \infty}\int_0^t \Sigma_1(\lambda, s) ds = 0.
\end{equation}
On the other hand, we infer from \eqref{coagulation kernel} that
\begin{align*}
\Sigma_2(\lambda, s) & \le k  \int_{\lambda}^{\infty}  \int_1^{\lambda} \nu (\mu+\nu) \zeta({\mu},s) \zeta({\nu},s) d{\nu} d{\mu} \\
& \le 2k  \int_{\lambda}^{\infty} \int_1^{\lambda}  \mu \nu \zeta({\mu},s) \zeta({\nu},s) d{\nu} d{\mu}  \le 2k \|\zeta(s)\|_{\mathcal{Y}} \int_{\lambda}^\infty \mu \zeta(\mu,s) d\mu,
\end{align*}
and using the same argument as in \eqref{Barik1}, we conclude that
\begin{equation*}
\lim_{\lambda\to \infty}\int_0^t \Sigma_2(\lambda, s) ds = 0.
\end{equation*}
Recalling \eqref{Barik1}, we  obtain the desired result of Lemma~\ref{lemma2}.\\
\end{proof}
Now, we are in a position to prove \eqref{ThemMasscons}.
\begin{proof}[Proof of \eqref{ThemMasscons}]  Let $t\in (0,T)$. Then, from Lemma~\ref{lemma2}, we find
\begin{equation}\label{mce10}
\lim_{\lambda \to \infty}\int_0^t \int_{\lambda}^{\infty} \int_0^{\lambda} \nu \Lambda({\mu},{\nu}) \zeta({\mu},s) \zeta({\nu},s)d{\nu}d{\mu} ds=0,
\end{equation}
it readily follows from \eqref{mce10} that the left-hand side of \eqref{lemma11} converges to zero as $\lambda \to\infty$. Thus, we have
\begin{equation*}
\mathcal{M}_1(\zeta)(t) = \lim_{\lambda \to\infty} \int_0^{\lambda} \mu \zeta(\mu, t) d\mu = \lim_{\lambda \to\infty} \int_0^{\lambda} \mu \zeta^{in}(\mu) d\mu
= \mathcal{M}_1(\zeta^{in}).
\end{equation*}
This completes the proof of \eqref{ThemMasscons}.
\end{proof}

\section*{Acknowledgments}
AKG would like to thank Philippe Lauren\c{c}ot, University of Toulouse, CNRS, Toulouse for valuable discussions that have helped to improve the content of the manuscript.



\end{document}